%% LyX 2.1.1 created this file.  For more info, see http://www.lyx.org/.
%% Do not edit unless you really know what you are doing.
\documentclass[twoside,english,american,DIV12]{scrartcl}
\usepackage[T1]{fontenc}
\usepackage[latin1]{inputenc}
\pagestyle{empty}
\setlength{\parskip}{\medskipamount}
\setlength{\parindent}{0pt}
\usepackage{babel}
\usepackage{verbatim}
\usepackage{prettyref}
\usepackage{mathtools}
\usepackage{amsthm}
\usepackage{amsmath}
\usepackage{amssymb}
\usepackage[unicode=true,pdfusetitle,
 bookmarks=true,bookmarksnumbered=false,bookmarksopen=false,
 breaklinks=false,pdfborder={0 0 1},backref=false,colorlinks=false]
 {hyperref}

\makeatletter
%%%%%%%%%%%%%%%%%%%%%%%%%%%%%% Textclass specific LaTeX commands.
 \theoremstyle{definition}
 \newtheorem*{defn*}{\protect\definitionname}
\theoremstyle{plain}
\newtheorem{thm}{\protect\theoremname}[section]
  \theoremstyle{definition}
  \newtheorem{example}[thm]{\protect\examplename}
  \theoremstyle{plain}
  \newtheorem{lem}[thm]{\protect\lemmaname}
  \theoremstyle{remark}
  \newtheorem{rem}[thm]{\protect\remarkname}
  \theoremstyle{plain}
  \newtheorem{prop}[thm]{\protect\propositionname}
  \theoremstyle{plain}
  \newtheorem{cor}[thm]{\protect\corollaryname}

%%%%%%%%%%%%%%%%%%%%%%%%%%%%%% User specified LaTeX commands.
%% LyX 1.6.7 created this file.  For more info, see http://www.lyx.org/.
%% Do not edit unless you really know what you are doing.

%\usepackage{helvet}
%\usepackage[english]{babel}     % Deutsche Silbentrennung
%\selectlanguage{english}
%\usepackage[cp850]{inputenc}    % Umlaute direkt ber PC-Tastatur
\usepackage[T1]{fontenc}    % Silbentrennung auch nach Umlauten mglich
\usepackage{enumerate}
\usepackage{a4wide}        % A4-Format
\addtolength{\headheight}{2pt}
\usepackage{amsmath}
\usepackage{euscript}
\usepackage{url}
\usepackage{scrpage2}
\pagestyle{useheadings}
\usepackage{tu-preprint}
\usepackage{scrpage2}
\pagestyle{useheadings}

\allowdisplaybreaks[1] % allow page breaks in some displayed equations
\usepackage{amsfonts}
%\usepackage{overbracket}
%\allowdisplaybreaks[1] % allow page breaks in some displayed equations
\usepackage{amsfonts}
\newenvironment{keywords}{ \noindent\footnotesize\textbf{Keywords and phrases:}}{}

\newenvironment{class}{\noindent\footnotesize\textbf{Mathematics subject classification 2010:}}{}

\usepackage{color}

                  % sets
%\newcommand*{\set}[1]{{\mathit{bold}{#1}}}                  % sets
                % fields
     % graph
%\newcommand*{\closure}[1]{\overline{#1}}            % closure
%\newcommand*{\closure}{\mathop{\mathrm{cl}}\nolimits}            %
            % domain

            % Span
            % degree

%\newcommand*{\restricted}[1]{{\rule[-1.2ex]{0.4pt}{2.9ex}\,}_{#1}}
%\newcommand*{\restricted}[1]{{\!\mid\!}{#1}}

\newcommand*{\dive}{\operatorname{div}}

\newcommand*{\curl}{\operatorname{curl}}

\newcommand*{\grad}{\operatorname{grad}}

\renewcommand*{\i}{\mathrm{i}}

\newcommand{\loc}{\mathrm{loc}}
%\newcommand{\f}[1]{\textup{(\ref{#1})}}
%\newcommand{\er}[1]{\textup{(#1)}}
%\DeclareRobustCommand{\f}[1]{\textup{(\ref{#1})}}
%\DeclareRobustCommand{\pref}[1]{\ref{#1}}
%\newcommand*{\la}{\big\langle}
%\newcommand*{\ra}{\big\rangle}

%\newcommand*{\id}{{\mathrm{id}}}

%\renewcommand{\closure}[1]{\operatorname{cl}(#1)}

%\renewcommand{\interior}[1]{\operatorname{int}(#1)}
%\renewcommand{\interior}[1]{\overset{\mathrm{o}}{#1}}
\DeclareMathAccent{\Circ}{\mathalpha}{operators}{"17}
\newcommand{\interior}[1]{\Circ{#1}}
\renewcommand{\Im}{\operatorname{\mathfrak{Im}}}
\renewcommand{\Re}{\operatorname{\mathfrak{Re}}}

%\newcmmand{\oline}[1]{\overline{#1}}
%\newcommand{\oline}[1]{\widehat{#1}}
%\newcommand{\uline}[1]{\underline{#1}}
%\newcommand{\uline}[1]{\check{#1}}
%\newcommand{\mline}[1]{\widetilde{#1}}

\newcommand{\m}{\mathrm{m}}
\renewcommand{\hat}{\widehat}
\renewcommand{\tilde}{\widetilde}
\renewcommand*{\epsilon}{\varepsilon}
\renewcommand*{\rho}{\varrho}

\arraycolsep2pt

\newrefformat{prop}{Proposition \ref{#1}}
\newrefformat{lem}{Lemma \ref{#1}}
\newrefformat{thm}{Theorem \ref{#1}}
\newrefformat{cor}{Corollary \ref{#1}}
\newrefformat{rem}{Remark \ref{#1}}
\newrefformat{ex}{Example \ref{#1}}
\newrefformat{sub}{Subsection \ref{#1}}
\newrefformat{eq}{(\ref{#1})}

\newrefformat{item}{(\ref{#1})}

\theoremstyle{definition}
\newtheorem*{hyp}{Hypotheses}

\makeatother

\institut{Institut f\"ur Analysis}

\preprintnumber{MATH-AN-03-2014}

\preprinttitle{Exponential stability for second order evolutionary problems.}

\author{Sascha Trostorff}

%\selectlanguage{american}%
%\setcounter{section}{-1}

%\date{}

\AtBeginDocument{
  
}

\makeatother

  \addto\captionsamerican{\renewcommand{\corollaryname}{Corollary}}
  \addto\captionsamerican{\renewcommand{\definitionname}{Definition}}
  \addto\captionsamerican{\renewcommand{\examplename}{Example}}
  \addto\captionsamerican{\renewcommand{\lemmaname}{Lemma}}
  \addto\captionsamerican{\renewcommand{\propositionname}{Proposition}}
  \addto\captionsamerican{\renewcommand{\remarkname}{Remark}}
  \addto\captionsamerican{\renewcommand{\theoremname}{Theorem}}
  \addto\captionsenglish{\renewcommand{\corollaryname}{Corollary}}
  \addto\captionsenglish{\renewcommand{\definitionname}{Definition}}
  \addto\captionsenglish{\renewcommand{\examplename}{Example}}
  \addto\captionsenglish{\renewcommand{\lemmaname}{Lemma}}
  \addto\captionsenglish{\renewcommand{\propositionname}{Proposition}}
  \addto\captionsenglish{\renewcommand{\remarkname}{Remark}}
  \addto\captionsenglish{\renewcommand{\theoremname}{Theorem}}
  \providecommand{\corollaryname}{Corollary}
  \providecommand{\definitionname}{Definition}
  \providecommand{\examplename}{Example}
  \providecommand{\lemmaname}{Lemma}
  \providecommand{\propositionname}{Proposition}
  \providecommand{\remarkname}{Remark}
\providecommand{\theoremname}{Theorem}

\begin{document}
\makepreprinttitlepage

\author{ Sascha Trostorff \\ Institut f\"ur Analysis, Fachrichtung Mathematik\\ Technische Universit\"at Dresden\\ Germany\\ sascha.trostorff@tu-dresden.de}

\title{Exponential stability for second order evolutionary problems.}

\maketitle
\begin{abstract} \textbf{Abstract.} We study the exponential stability
of evolutionary equations. The focus is laid on second order problems
and we provide a way to rewrite them as a suitable first order evolutionary
equation, for which the stability can be proved by using frequency
domain methods. The problem class under consideration is broad enough
to cover integro-differential equations, delay-equations and classical
evolution equations within a unified framework. \end{abstract}

\begin{keywords}Exponential stability, hyperbolic problems, evolutionary
equations, integro-differential equations, frequency domain methods,
delay.\end{keywords}

\begin{class} 35B30; 35B40; 35G15; 47N20  \end{class}

\newpage

\tableofcontents{} 

\newpage

\section{Introduction}

Evolutionary equations, as they were first introduced by Picard (\cite{Picard,Picard_McGhee,Picard2014_survey}),
consist of a first order differential equation on $\mathbb{R}$ as
the time-line 
\[
\partial_{0}v+Au=f,
\]
where $\partial_{0}$ denotes the derivative with respect to time
and $A$ is a suitable closed linear operator on a Hilbert space (frequently
a block-operator-matrix with spatial differential operators as entries).
The function $u$ and $v$ are the unknowns, while $f$ is a given
source term. This simple equation is completed by a so-called linear
material law linking $u$ and $v$:
\[
v=\mathcal{M}u,
\]
where $\mathcal{M}$ is an operator acting in time and space. Thus,
the differential equation becomes 
\begin{equation}
\left(\partial_{0}\mathcal{M}+A\right)u=f\label{eq:evo}
\end{equation}
a so-called evolutionary problem. The operator sum on the left-hand
side will be established in a suitable Hilbert space and thus, the
well-posedness of \prettyref{eq:evo} relies on the bounded invertibility
of that operator sum. For doing so, one establishes the time-derivative
$\partial_{0}$ as a normal boundedly invertible operator in a suitable
exponentially weighted $L_{2}-$space. With the spectral representation
of this normal operator at hand, one specifies the operator $\mathcal{M}$
as to be an analytic operator-valued function of $\partial_{0}^{-1}.$
Then $\mathcal{\ensuremath{M}}$ enjoys the property that it is causal
due to the Theorem of Paley and Wiener (see e.g. \cite[Theorem 19.2]{rudin1987real}).
Although the requirement of analyticity seems to be very strong, these
operators cover a broad class of possible space-time operators like
convolutions with suitable kernels naturally arising in the study
of integro-differential equations (see e.g. \cite{Trostorff2012_integro}),
translations with respect to time as they occur in delay equations
(see \cite{Kalauch2011}) as well as fractional derivatives (see \cite{Picard2013_fractional}).
Thus, the setting of evolutionary equations provides a unified framework
for a broad class of partial differential equations. We note that
the causality of $\mathcal{M}$ also yields the causality of the solution
operator $(\partial_{0}\mathcal{M}+A)^{-1}$ of our evolutionary problem
\prettyref{eq:evo}, which can be seen as a characterizing property
of evolutionary processes.\\
After establishing the well-posedness of \prettyref{eq:evo}, one
is interested in qualitative properties of the solution $u$. A first
property, which can be discussed, is the asymptotic behaviour of $u$,
especially the question of exponential stability. The study of stability
for differential equations goes back to Lyapunov and a lot of approaches
has been developed to tackle this question over the last decades.
We just like to mention some classical results for evolution equations,
using the framework of strongly continuous semigroups, like Datko's
Lemma \cite{Datko_1970} or the Theorem of Gearhart-Pr�ss \cite{Gearhart_1978,Pruss1984}
(see also \cite[Chapter V]{engel2000one} for the asymptotics of semigroups).
Unfortunately, these results are not applicable to evolutionary equations.
The main reason for that is that the solution $u$ of \prettyref{eq:evo}
is not continuous, unless the right-hand side $f$ is regular, so
that point-wise estimates for the solution $u$ (and this is how exponential
stability is usually defined) do not make any sense. Hence, we need
to introduce a more general notion of exponential stability for that
class of problems. This was done by the author in \cite{Trostorff2013_stability,Trostorff2014_PAMM}
(see also Subsection 2.2 in this article), where also sufficient conditions
on the material law $\mathcal{M}$ to obtain exponential stability
were derived.\\
The main purpose of this article is to study the exponential stability
of evolutionary problems of second order in time and space, i.e. to
equations of the form 
\begin{equation}
\left(\partial_{0}^{2}\mathcal{M}+C^{\ast}C\right)u=f,\label{eq:hyp}
\end{equation}
where $C$ is a densely defined closed linear operator, which is assumed
to be boundedly invertible. For doing so, we need to rewrite the above
problem as an evolutionary equation of first order in time. As it
turns out there are several ways to do this yielding a family of new
material law operators $\left(\mathcal{M}_{d}\right)_{d>0}$, such
that \prettyref{eq:hyp} can be written as 
\begin{equation}
\left(\partial_{0}\mathcal{M}_{d}+\left(\begin{array}{cc}
0 & C^{\ast}\\
-C & 0
\end{array}\right)\right)\left(\begin{array}{c}
\partial_{0}u+du\\
Cu
\end{array}\right)=\left(\begin{array}{c}
f\\
0
\end{array}\right),\label{eq:aux}
\end{equation}
for each $d>0$. The goal is now to state sufficient conditions for
the original operator $\mathcal{M},$ such that there is $d>0$ for
which the exponential stability of the equivalent problem \prettyref{eq:aux}
can be derived. This is the main objective of Subsection 3.1.\\
The article is structured as follows. Section 2 is devoted to the
framework of evolutionary equations. In this section we recall the
definition of the time-derivative $\partial_{0}$ and of linear material
laws. Moreover, we introduce the notion of exponential stability for
evolutionary equations and provide a characterization result using
Frequency Domain Methods (\prettyref{thm:characterization_exp_stability}).
As already indicated above, Subsection 3.1 deals with the exponential
stability of \prettyref{eq:hyp} or equivalently of \prettyref{eq:aux}
and provides sufficient constraints on the material law $\mathcal{M}$,
which yield the exponential stability. In Subsection 3.2 we focus
on special material law operators $\mathcal{M}$, namely convolutions
with a kernel $k$. We derive sufficient conditions on the kernel
$k$, such that the corresponding integro-differential equations becomes
exponentially stable. Exponential stability of hyperbolic integro-differential
equations was studied for instance in \cite{Cannarsa2011} and \cite{Pruss2009}
(where in \cite{Pruss2009} also polynomial stability was addressed).
We show that our conditions on the kernel $k$, which -- in contrast
to the aforementioned references -- is operator-valued, are weaker
than the ones imposed in \cite{Cannarsa2011,Pruss2009}. Finally,
in Section 4, we treat a concrete example of a wave equation with
convolution integral and time delay. This problem was recently studied
in \cite{Alabau2014} by using semigroup techniques and constructing
a suitable energy for which the exponential decay was shown. We will
see that this problem is covered by our abstract considerations and
hence, the exponential stability follows. Moreover, our approach allows
to relax the assumptions on the kernel of the convolution integral.

Throughout, all Hilbert spaces are assumed to be complex. Their inner
products are denoted by $\langle\cdot|\cdot\rangle,$ which are linear
in the second and conjugate linear in the first argument and the induced
norms are denoted by $|\cdot|$.

\section{Evolutionary problems}

In this section we recall the notion of evolutionary problems, as
they were first introduced in \cite{Picard}, \cite[Chapter 6]{Picard_McGhee}
(see also \cite{Picard2014_survey} for a survey). We begin by introducing
the time-derivative in an exponentially weighted $L_{2}$-space in
the first subsection. The second subsection is devoted to the well-posedness
and exponential stability of evolutionary problems. The main theorem
in this subsection (\prettyref{thm:characterization_exp_stability})
characterizes the exponential stability of an evolutionary problem
by point-wise properties of the unitarily equivalent multiplication
operators (such methods are frequently referred to as Frequency Domain
Methods). We remark that we generalize the notion of evolutionary
problems in the sense that we do not impose monotonicity constraints
on the operators involved. Throughout, let $H$ be a Hilbert space.

\subsection{The time-derivative}

We introduce the time-derivative as a boundedly invertible operator
on an exponentially weighted $L_{2}$-space. This idea first appears
in \cite{picard1989hilbert}. For the proofs of the forthcoming results,
we refer to \cite{Picard,Kalauch2011,Picard_McGhee}. For $\rho\in\mathbb{R}$
we consider the following Hilbert space 
\[
H_{\rho}(\mathbb{R};H)\coloneqq\left\{ f:\mathbb{R}\to H\,|\, f\mbox{ measurable,}\,\intop_{\mathbb{R}}|f(t)|^{2}e^{-2\rho t}\mbox{ d}t<\infty\right\} 
\]
equipped with the inner product 
\[
\langle f|g\rangle_{H_{\rho}(\mathbb{R};H)}\coloneqq\intop_{\mathbb{R}}\langle f(t)|g(t)\rangle_{H}e^{-2\rho t}\mbox{ d}t\quad(f,g\in H_{\rho}(\mathbb{R};H)).
\]
We note that for $\rho=0$ this is nothing but the usual $L_{2}(\mathbb{R};H).$
Moreover we define the operators 
\begin{align*}
e^{-\rho\mathrm{m}}:H_{\rho}(\mathbb{R};H) & \to L_{2}(\mathbb{R};H)\\
f & \mapsto\left(t\mapsto e^{-\rho t}f(t)\right)
\end{align*}
which are obviously unitary. We denote the derivative on $L_{2}(\mathbb{R};H)$
with maximal domain by $\partial$, i.e. 
\begin{align*}
\partial:H^{1}(\mathbb{R};H)\subseteq L_{2}(\mathbb{R};H) & \to L_{2}(\mathbb{R};H)\\
f & \mapsto f',
\end{align*}
where $H^{1}(\mathbb{R};H)$ is the classical Sobolev-space of $L_{2}$-functions
whose distributional derivative also belongs to $L_{2}.$ It is well-known
that this operator is skew-selfadjoint with $\sigma(\partial)=\i\mathbb{R}$
(see e.g. \cite[Example 3.14]{kato1995perturbation}). Moreover, it
is well-known that $\partial$ is unitarily equivalent to the multiplication
operator $\i\mathrm{m}$ on $L_{2}(\mathbb{R};H)$ with maximal domain,
i.e. 
\[
D(\mathrm{\i m})\coloneqq\left\{ f\in L_{2}(\mathbb{R};H)\,|\,\left(\i\mathrm{m}\right)f=(t\mapsto\i tf(t))\in L_{2}(\mathbb{R};H)\right\} ,
\]
where the unitary transformation is given by the Fourier-transform,
defined as the unitary extension of 
\[
\mathcal{F}|_{L_{1}(\mathbb{R};H)\cap L_{2}(\mathbb{R};H)}:L_{1}(\mathbb{R};H)\cap L_{2}(\mathbb{R};H)\subseteq L_{2}(\mathbb{R};H)\to L_{2}(\mathbb{R};H)
\]
with 
\[
\left(\mathcal{F}f\right)(t)\coloneqq\frac{1}{\sqrt{2\pi}}\intop_{\mathbb{R}}e^{-\i st}f(s)\mbox{ d}s\quad(f\in L_{1}(\mathbb{R};H)\cap L_{2}(\mathbb{R};H),\, t\in\mathbb{R}).
\]
In other words we have 
\[
\partial=\mathcal{F}^{\ast}\left(\i\mathrm{m}\right)\mathcal{F}.
\]
Using now the unitary operators $e^{-\rho\mathrm{m}}$ we define the
derivative $\partial_{0,\rho}$ on $H_{\rho}(\mathbb{R};H)$ by%
\footnote{\label{fn:domain}The equality of operators especially implies the
equality of their domains. Thus, the domain of $\partial_{0,\rho}$
is given by the natural domain of the operator $\left(e^{-\rho\mathrm{m}}\right)^{-1}\partial e^{-\rho\mathrm{m}}+\rho$,
which is
\[
\left\{ \left.u\in H_{\rho}(\mathbb{R};H)\,\right|\, e^{-\rho\mathrm{m}}u\in H^{1}(\mathbb{R};H)\right\} .
\]
} 
\[
\partial_{0,\rho}\coloneqq\left(e^{-\rho\mathrm{m}}\right)^{-1}\partial e^{-\rho\mathrm{m}}+\rho.
\]
 Indeed, this definition yields 
\[
\left(\partial_{0,\rho}\phi\right)(t)=e^{\rho t}\left(\phi'(t)e^{-\rho t}-\rho\phi(t)e^{-\rho t}\right)+\rho\phi(t)=\phi'(t)\quad(t\in\mathbb{R})
\]
for each $\phi\in C_{c}^{\infty}(\mathbb{R};H)$ -- the space of arbitrarily
differentiable functions on $\mathbb{R}$ with values in $H$ and
compact support. As an immediate consequence of this definition we
obtain that 
\[
\sigma(\partial_{0,\rho})=\rho+\i\mathbb{R},
\]
which in particular yields that $\partial_{0,\rho}$ is boundedly
invertible if and only if $\rho\ne0.$ Moreover, the spectrum is continuous
spectrum. The inverse $\partial_{0,\rho}^{-1}$ is given by 
\[
\left(\partial_{0,\rho}^{-1}f\right)(t)=\begin{cases}
\intop_{-\infty}^{t}f(s)\mbox{ d}s & \mbox{ if }\rho>0,\\
-\intop_{t}^{\infty}f(s)\mbox{ d}s & \mbox{ if }\rho<0.
\end{cases}
\]
Thus, $\rho>0$ corresponds to the so-called causal%
\footnote{Roughly speaking, causality means that the image at time $t$ just
depends on the values of the pre-image up to the same time $t$, while
anticausality means that it just depends on the values of the pre-image
beginning at time $t$. For a precise definition of causality, we
refer to \prettyref{rem:causality} (a).%
} case, while $\rho<0$ gives the anticausality of $\partial_{0,\rho}^{-1}$.
We further note that also $\partial_{0,\rho}$ is unitarily equivalent
to a multiplication operator on $L_{2}(\mathbb{R};H).$ The unitary
mapping is the so-called Fourier-Laplace transform given as 
\[
\mathcal{L}_{\rho}\coloneqq\mathcal{F}e^{-\rho\mathrm{m}}:H_{\rho}(\mathbb{R};H)\to L_{2}(\mathbb{R};H).
\]
Indeed, we obtain that 
\begin{align*}
\mathcal{L}_{\rho}\partial_{0,\rho}\mathcal{L}_{\rho}^{\ast} & =\mathcal{F}e^{-\rho\mathrm{m}}\left(\left(e^{-\rho\mathrm{m}}\right)^{-1}\partial e^{-\rho\mathrm{m}}+\rho\right)\left(e^{-\rho\mathrm{m}}\right)^{-1}\mathcal{F}^{\ast}\\
 & =\mathcal{F}\partial\mathcal{F}^{\ast}+\rho\\
 & =\i\mathrm{m}+\rho.
\end{align*}

\subsection{Well-posedness and exponential stability}

Throughout, let $A:D(A)\subseteq H\to H$ be a densely defined closed
linear operator. We define now, what we mean by a linear material
law.
\begin{defn*}
Let $\Omega\subseteq\mathbb{C}$ open such that%
\footnote{For $\mu\in\mathbb{R}$ we set 
\begin{align*}
\mathbb{C}_{\Re\gtreqqless\mu} & \coloneqq\left\{ z\in\mathbb{C}\,|\,\Re z\gtreqqless\mu\right\} ,\\
\mathbb{C}_{\Im\gtreqqless\mu} & \coloneqq\left\{ z\in\mathbb{C}\,|\,\Im z\gtreqqless\mu\right\} .
\end{align*}
} $\mathbb{C}_{\Re>\mu}\setminus\{0\}\subseteq[\Omega]^{-1}=\{z^{-1}\,|\, z\in\Omega\}$
for some $\mu\in\mathbb{R}.$ A \emph{linear material law} is an analytic
mapping $M:\Omega\to L(H).$ 
\end{defn*}
As multiplication operators will play an important role in the framework
of evolutionary problems, we introduce them in the following definition.
\begin{defn*}
For $t\in\mathbb{R}$ let $T(t)$ be a linear operator on $H$. Then
we set 
\begin{align*}
T(\m):D(T(\m))\subseteq L_{2}(\mathbb{R};H) & \to L_{2}(\mathbb{R};H)\\
f & \mapsto\left(t\mapsto T(t)f(t)\right),
\end{align*}
where 
\[
D(T(\m))\coloneqq\left\{ f\in L_{2}(\mathbb{R};H)\,|\, f(t)\in D(T(t))\mbox{ for a.e. }t\in\mathbb{R},\,(t\mapsto T(t)f(t))\in L_{2}(\mathbb{R};H)\right\} .
\]

\end{defn*}
With the help of this notion we are able to define so-called evolutionary
problems.
\begin{defn*}
Let $M$ be a linear material law. We associate the multiplication
operator $M\left(\frac{1}{\i\m+\rho}\right)$ on $L_{2}(\mathbb{R};H)$
for $\rho$ large enough, i.e. 
\[
\left(M\left(\frac{1}{\i\mathrm{m}+\rho}\right)f\right)(t)\coloneqq M\left(\frac{1}{\i t+\rho}\right)f(t),
\]
where $f\in L_{2}(\mathbb{R};H)$ such that $\left(t\mapsto M\left(\frac{1}{\i t+\rho}\right)f(t)\right)\in L_{2}(\mathbb{R};H)$.
Moreover, we consider its unitarily equivalent operator 
\[
M(\partial_{0,\rho}^{-1})\coloneqq\mathcal{L}_{\rho}^{\ast}M\left(\frac{1}{\i\m+\rho}\right)\mathcal{L}_{\rho}
\]
with its natural domain (cp. Footnote \ref{fn:domain}). An \emph{evolutionary
problem} is an equation in $H_{\rho}(\mathbb{R};H)$ of the form 
\begin{equation}
\left(\partial_{0,\rho}M(\partial_{0,\rho}^{-1})+A\right)u=f,\label{eq:evo1}
\end{equation}
where we identify $A$ with its canonical extension to $H_{\rho}(\mathbb{R};H)$
given by $\left(Au\right)(t)\coloneqq A\left(u(t)\right)$ for almost
every $t\in\mathbb{R}$ and $u\in H_{\rho}(\mathbb{R};H)$ such that
$u(t)\in D(A)$ for almost every $t\in\mathbb{R}$ and $\left(t\mapsto A\left(u(t)\right)\right)\in H_{\rho}(\mathbb{R};H).$
\end{defn*}
Let us illustrate the class of evolutionary equations by some examples.
\begin{example}
$\,$

\begin{enumerate}[(a)]

\item Let $A=\left(\begin{array}{cc}
0 & -C^{\ast}\\
C & 0
\end{array}\right):D(C)\times D(C^{\ast})\subseteq H_{0}\oplus H_{1}\to H_{0}\oplus H_{1}$, where $C:D(C)\subseteq H_{0}\to H_{1}$ is a densely defined closed
linear operator%
\footnote{A typical example could be $C=\grad,$ where $D(C)=H_{0}^{1}(\Omega),H_{0}=L_{2}(\Omega)$
and $H_{1}=L_{2}(\Omega)^{n}.$ Then $C^{\ast}=-\dive$, the divergence
on $L_{2}$. But also $C=\curl$ is possible, which allows the treatment
of Maxwell's equations. %
} between two Hilbert spaces $H_{0},H_{1}$. By setting $M(z)\coloneqq M_{0}+zM_{1}$
for operators $M_{0},M_{1}\in L(H)$ with $H\coloneqq H_{0}\oplus H_{1}$
we cover a class of parabolic, hyperbolic and elliptic problems. Indeed,
if $M_{0}=\left(\begin{array}{cc}
M_{00} & 0\\
0 & 0
\end{array}\right),$$M_{1}=\left(\begin{array}{cc}
0 & 0\\
0 & M_{11}
\end{array}\right)$ the problem reads as 
\[
\left(\partial_{0,\rho}\left(M_{0}+\partial_{0,\rho}^{-1}M_{1}\right)+A\right)u=f.
\]
In particular, letting $f=\left(\begin{array}{c}
h\\
0
\end{array}\right)\in H_{\rho,0}(\mathbb{R};H)$ and setting $u=\left(\begin{array}{c}
u_{0}\\
u_{1}
\end{array}\right)$ we read off that 
\begin{align*}
\partial_{0,\rho}M_{00}u_{0}-C^{\ast}u_{1} & =f,\\
M_{11}u_{1}+Cu_{0} & =0.
\end{align*}
Thus, assuming that $M_{11}$ is boundedly invertible, the second
equation reads as $u_{1}=-M_{11}^{-1}Cu_{0}$ and thus, we indeed
end up with an equation of parabolic type for $u_{0}$ 
\[
\partial_{0,\rho}M_{00}u_{0}+C^{\ast}M_{11}^{-1}Cu_{0}=f.
\]

If $M_{0}=\left(\begin{array}{cc}
M_{00} & 0\\
0 & M_{01}
\end{array}\right)$ and $M_{1}=0$ we get 
\begin{align*}
\partial_{0,\rho}M_{00}u_{0}-C^{\ast}u_{1} & =f,\\
\partial_{0,\rho}M_{01}u_{1}+Cu_{0} & =0.
\end{align*}
Again, assuming that $M_{01}$ is boundedly invertible, we obtain
$u_{1}=-\partial_{0,\rho}^{-1}M_{01}^{-1}Cu_{0}$ and thus, the first
equation reads as 
\[
\partial_{0,\rho}M_{00}u_{0}+C^{\ast}\partial_{0,\rho}^{-1}M_{01}^{-1}Cu_{0}=f.
\]
Differentiation yields 
\[
\partial_{0,\rho}^{2}M_{00}u_{0}+C^{\ast}M_{01}^{-1}Cu_{0}=\partial_{0,\rho}f,
\]
which gives an equation of hyperbolic type. Finally, choosing $M_{0}=0$
and $M_{1}=\left(\begin{array}{cc}
M_{10} & 0\\
0 & M_{11}
\end{array}\right)$ we end up with an elliptic type problem of the form 
\begin{align*}
M_{10}u_{0}-C^{\ast}u_{1} & =f,\\
M_{11}u_{1}+Cu_{0} & =0,
\end{align*}

which my be rewritten as 
\[
M_{10}u_{0}+C^{\ast}M_{11}^{-1}Cu_{0}=f.
\]

Also, problems of mixed type are treatable, i.e. equations which are
hyperbolic on one part of the domain, parabolic on another one and
elliptic on a third part (see e.g. \cite[Example 2.43]{Picard2014_survey},
\cite[p.8]{Waurick2013_continuous_dep}). It should be noted that
in all previous examples, $M_{0}$ and $M_{1}$ are also allowed to
have non-vanishing off-diagonal entries. Moreover, in the abstract
setting of evolutionary equations, there is no need to assume that
the block structures of $A$ and $M_{0}$ and $M_{1}$ are comparable.
This allows the treatment of even more general differential equations. 

\item Let $H=L_{2}(\Omega)$ for some $\Omega\subseteq\mathbb{R}^{n}$
and $k:\mathbb{R}_{\geq0}\to\mathbb{R}$ be a measurable integrable
function. Setting $M(z)=\sqrt{2\pi}\hat{k}(-\i z^{-1})$, where $\hat{k}$
denotes the Fourier transform of $k$, we end up with an evolutionary
problem of the form 
\[
\partial_{0,\rho}k\ast u+Au=f,
\]
which is an integro-differential equation. For a detailed study of
integro-differential equations within the framework of evolutionary
problems we refer to \cite{Trostorff2012_integro}. A concrete example
is also treated in Section 4.

\item Setting $M(z)=z^{-\alpha}$ for some $\alpha\in]0,1[$ we get
\[
\partial_{0,\rho}\partial_{0,\rho}^{-\alpha}u+Au=\partial_{0,\rho}^{1-\alpha}u+Au=f,
\]
which covers a class of fractional differential equations. For more
details and a more complicated examples we refer to \cite{Picard2013_fractional}.

\end{enumerate}\end{example}
\begin{lem}
\label{lem:mult_operator}Let $\rho\in\mathbb{R}$. We consider a
continuous mapping $L:\left\{ \i t+\rho\,|\, t\in\mathbb{R}\right\} \to L(H)$.
For $t\in\mathbb{R}$ we define 
\begin{align*}
T(\i t+\rho):D(A)\subseteq H & \to H\\
x & \mapsto\left(L(\i t+\rho)+A\right)x.
\end{align*}
We consider the corresponding multiplication operators $T(\i\m+\rho)$
and $L(\i\m+\rho)$ on $L_{2}(\mathbb{R};H)$. Then%
\footnote{Note that the natural domain of the operator $L(\i\mathrm{m}+\rho)+A$
is given by 
\begin{align*}
 & D(L(\i\mathrm{m}+\rho))\cap D(A)\\
 & =\left\{ \left.f\in L_{2}(\mathbb{R};H)\,\right|\, f(t)\in D(A)\mbox{ for a.e. }t\in\mathbb{R},\,\left(t\mapsto L(\i t+\rho)f(t)\right)\in L_{2}(\mathbb{R};H),\,\left(t\mapsto Af(t)\right)\in L_{2}(\mathbb{R};H)\right\} ,
\end{align*}
which is in general a proper subset of 
\[
D(T(\i\mathrm{m}+\rho))=\left\{ f\in L_{2}(\mathbb{R};H)\,|\, f(t)\in D(A)\mbox{ for a.e. }t\in\mathbb{R},\,\left(t\mapsto L(\i t+\rho)f(t)+Af(t)\right)\in L_{2}(\mathbb{R};H)\right\} .
\]
} 
\[
T(\i\m+\rho)=\overline{L(\i\m+\rho)+A},
\]
where we identify $A$ with its canonical extension to $L_{2}(\mathbb{R};H)$.\end{lem}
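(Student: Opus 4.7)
The plan is to prove the inclusion ``$\supseteq$'' in the obvious way and then establish ``$\subseteq$'' via a localisation/cutoff argument that exploits the continuity of $L$. Concretely, the identity will follow once I verify three things: (i) $L(\i\m+\rho)+A\subseteq T(\i\m+\rho)$; (ii) $T(\i\m+\rho)$ is closed on $L_2(\mathbb{R};H)$; (iii) the domain $D(L(\i\m+\rho))\cap D(A)$ is a core for $T(\i\m+\rho)$.

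For (i), if $f$ lies in $D(L(\i\m+\rho))\cap D(A)$ then $f(t)\in D(A)=D(T(\i t+\rho))$ for a.e.\ $t$, and $(t\mapsto L(\i t+\rho)f(t)+Af(t))$ is in $L_2$ as a sum of two $L_2$-functions; hence $f\in D(T(\i\m+\rho))$ and the two operators agree on that domain. For (ii), observe that for each $t\in\mathbb{R}$ the fibre $T(\i t+\rho)=L(\i t+\rho)+A$ is closed, being the sum of the bounded operator $L(\i t+\rho)\in L(H)$ with the closed operator $A$. If $(f_n)_n$ in $D(T(\i\m+\rho))$ satisfies $f_n\to f$ and $T(\i\m+\rho)f_n\to g$ in $L_2(\mathbb{R};H)$, then passing to a subsequence we may assume pointwise a.e.\ convergence of both sequences, and closedness of the fibres yields $f(t)\in D(A)$ a.e.\ with $T(\i t+\rho)f(t)=g(t)$; thus $f\in D(T(\i\m+\rho))$ and $T(\i\m+\rho)f=g$.

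The main step is (iii), and this is where I would expect the only real subtlety, because $D(L(\i\m+\rho))\cap D(A)$ can be strictly smaller than $D(T(\i\m+\rho))$ due to cancellations between $L(\i\m+\rho)$ and $A$. The key idea is to use a time-cutoff combined with the fact that $L$ is continuous on all of $\{\i t+\rho:t\in\mathbb{R}\}$ and therefore bounded on every compact piece. Given $f\in D(T(\i\m+\rho))$, set
\[
f_n\coloneqq \mathbf{1}_{[-n,n]}f\qquad(n\in\mathbb{N}).
\]
Since $L$ is continuous, $C_n\coloneqq \sup_{|t|\le n}\|L(\i t+\rho)\|<\infty$, so
\[
\int_{\mathbb{R}}|L(\i t+\rho)f_n(t)|^2\dd t \le C_n^2\int_{-n}^{n}|f(t)|^2\dd t<\infty,
\]
and thus $f_n\in D(L(\i\m+\rho))$. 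Writing $Af_n(t)=\mathbf{1}_{[-n,n]}(t)\bigl(T(\i t+\rho)f(t)-L(\i t+\rho)f(t)\bigr)$, the right-hand side is in $L_2(\mathbb{R};H)$ by the same estimate, hence also $f_n\in D(A)$.

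Finally, $f_n\to f$ in $L_2(\mathbb{R};H)$ by dominated convergence, and
\[
T(\i\m+\rho)f_n = \mathbf{1}_{[-n,n]}\,T(\i\m+\rho)f \longrightarrow T(\i\m+\rho)f
\]
in $L_2(\mathbb{R};H)$, again by dominated convergence. This shows that $D(L(\i\m+\rho))\cap D(A)$ is dense in $D(T(\i\m+\rho))$ with respect to the graph norm of $T(\i\m+\rho)$. Combining (i)--(iii), the closure of the restriction $L(\i\m+\rho)+A$ coincides with the closed operator $T(\i\m+\rho)$, which is the asserted identity.
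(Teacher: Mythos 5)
Your proof is correct and follows essentially the same route as the paper: establish closedness of $T(\i\m+\rho)$ via a pointwise a.e.\ subsequence argument, note the trivial inclusion $L(\i\m+\rho)+A\subseteq T(\i\m+\rho)$, and recover the reverse inclusion by cutting off to $[-n,n]$ and using that continuity of $L$ bounds $\|L(\i t+\rho)\|$ on compact intervals. The only cosmetic difference is that you phrase the last step as a core statement and invoke closedness of the fibres directly, whereas the paper separates $Af_n(t)=T(\i t+\rho)f_n(t)-L(\i t+\rho)f_n(t)$ and applies closedness of $A$; these are equivalent.
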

\begin{proof}
We first prove that $T(\i\m+\rho)$ is closed. For doing so, let $(f_{n})_{n\in\mathbb{N}}$
in $D(T(\i\m+\rho))$ with $f_{n}\to f$ and $T(\i\m+\rho)f_{n}\to g$
in $L_{2}(\mathbb{R};H).$ By passing to a suitable subsequence, we
may assume without loss of generality, that $f_{n}(t)\to f(t)$ and
$(T(\i\m+\rho)f_{n})(t)\to g(t)$ for almost every $t\in\mathbb{R}.$
As $L(\i t+\rho)\in L(H)$ for each $t\in\mathbb{R},$ we derive that
\[
L(\i t+\rho)f_{n}(t)\to L(\i t+\rho)f(t)
\]
for almost every $t\in\mathbb{R}.$ Consequently 
\[
Af_{n}(t)=T(\i t+\rho)f_{n}(t)-L(\i t+\rho)f_{n}(t)\to g(t)-L(\i t+\rho)f(t)
\]
for almost every $t\in\mathbb{R}$ and hence, by the closedness of
$A$ we obtain $f(t)\in D(A)$ and $L(\i t+\rho)f(t)+Af(t)=g(t)$
almost everywhere. This shows $f\in D(T(\i\m+\rho))$ and $T(\i\m+\rho)f=g$
and thus, $T(\i\m+\rho)$ is closed. Since trivially 
\[
L(\i\m+\rho)+A\subseteq T(\i\m+\rho)
\]
 we derive 
\[
\overline{L(\i\m+\rho)+A}\subseteq T(\i\m+\rho).
\]
For showing the converse inclusion, let $f\in D(T(\i\m+\rho)).$ For
$n\in\mathbb{N}$ we define $f_{n}\coloneqq\chi_{[-n,n]}(\m)f$ by
$f_{n}(x)\coloneqq\chi_{[-n,n]}(x)f(x)$ for $x\in\mathbb{R}.$ We
estimate 
\[
\intop_{\mathbb{R}}\left|L(\i t+\rho)f_{n}(t)\right|^{2}\mbox{ d}t=\intop_{-n}^{n}\left|L(\i t+\rho)f(t)\right|^{2}\mbox{ d}t\leq\sup_{t\in[-n,n]}\|L(\i t+\rho)\|^{2}|f|_{L_{2}(\mathbb{R};H)}^{2},
\]
showing that $f_{n}\in D(L(\i\m+\rho))$. Since clearly $f_{n}\in D(T(\i\m+\rho))$
we obtain that $f_{n}\in D\left(L(\i\m+\rho)+A\right)$. Now, since
$f_{n}\to f$ as well as $\left(L(\i\m+\rho)+A\right)f_{n}=T(\i\m+\rho)f_{n}\to T(\i\m+\rho)f$
in $L_{2}(\mathbb{R};H)$ by dominated convergence, we conclude $f\in D\left(\overline{L(\i\m+\rho)+A}\right)$.
\end{proof}
The latter lemma implies that the operator $\overline{\partial_{0,\rho}M(\partial_{0,\rho}^{-1})+A}$
is unitarily equivalent (via the Fourier-Laplace transformation $\mathcal{L}_{\rho}$)
to the multiplication operator $T(\i\m+\rho),$ where $T(z)=zM(z^{-1})+A.$
Indeed, for $L(z)\coloneqq zM(z^{-1})$ we get that 
\begin{equation}
\mathcal{L}_{\rho}\left(\overline{\partial_{0,\rho}M(\partial_{0,\rho}^{-1})+A}\right)\mathcal{L}_{\rho}^{\ast}=\overline{\mathcal{L}_{\rho}\partial_{0,\rho}M(\partial_{0,\rho}^{-1})\mathcal{L}_{\rho}^{\ast}+A}=\overline{L(\i\m+\rho)+A}=T(\i\m+\rho).\label{eq:unitary_eq}
\end{equation}
Hence, the study of evolutionary equations is equivalent to the study
of multiplication operators. This observation allows us to define
well-posedness of an evolutionary problem in terms of the function
$T$. 
\begin{defn*}
Let $M$ be a linear material law. We call the associated evolutionary
problem \emph{
\[
\left(\partial_{0,\rho}M(\partial_{0,\rho}^{-1})+A\right)u=f
\]
well-posed, }if there exists $\rho_{0}\in\mathbb{R}$ such that for
each $z\in\mathbb{C}_{\Re>\rho_{0}}\setminus\{0\}$ the operator $zM(z^{-1})+A$
is boundedly invertible with $\sup_{z\in\mathbb{C}_{\Re>\rho_{0}}\setminus\{0\}}\left\Vert \left(zM(z^{-1})+A\right)^{-1}\right\Vert <\infty.$
The infimum over all such numbers $\rho_{0}$ is called the growth
bound of the evolutionary problem and we denote it by $\omega_{0}(M,A).$ \end{defn*}
\begin{rem}
\label{rem:causality}$\,$

\begin{enumerate}[(a)]

\item We note that for a well-posed evolutionary problem we have
that 
\[
\overline{\partial_{0,\rho}M(\partial_{0,\rho}^{-1})+A}
\]
is boundedly invertible for each $\rho>\rho_{0}$ due to \prettyref{eq:unitary_eq}.
Moreover, by a Paley-Wiener-type argument (see e.g. \cite[Theorem 19.2]{rudin1987real}),
we obtain that the solution operator $S_{\rho}\coloneqq\left(\overline{\partial_{0,\rho}M(\partial_{0,\rho}^{-1})+A}\right)^{-1}$
is forward causal, in the sense that 
\[
\chi_{]-\infty,a]}(\m)S_{\rho}=\chi_{]-\infty,a]}(\m)S_{\rho}\chi_{]-\infty,a]}(\m).
\]
For more details we refer to \cite{Picard,Picard_McGhee}. 

\item For the evolutionary problems originally treated in \cite{Picard},
the operator $A$ was assumed to be skew-selfadjoint, while the material
law%
\footnote{For $z_{0}\in\mathbb{C}$ and $r>0$ we define $B(z_{0},r)\coloneqq\{z\in\mathbb{C}\,|\,|z-z_{0}|<r\}$.
Likewise $B[z_{0},r]\coloneqq\{z\in\mathbb{C}\,|\,|z-z_{0}|\leq r\}=\overline{B(z_{0},r)}.$%
} $M:B(r,r)\to L(H)$ was assumed to satisfy a positive definiteness
constraint of the form%
\footnote{For an operator $T\in L(H)$ we denote by $\Re T$ the selfadjoint
operator $\frac{1}{2}(T+T^{\ast}).$%
} 
\[
\forall z\in B(r,r):\Re z^{-1}M(z)\geq c>0.
\]
Clearly, these assumptions yield the well-posedness of the evolutionary
problem in the above sense. 

\item Evolutionary problems may be written as an abstract operator
equation of the form $\left(B+A\right)u=f,$ where $B\coloneqq\partial_{0,\rho}M(\partial_{0,\rho}^{-1})$.
Such abstract problems were studied in a Banach space setting by da
Prato and Grisvard \cite{daPrato1975} and the results were applied
to differential equations of hyperbolic and parabolic type. However,
these results are not applicable here, since the desired spectral
properties of the operators involved are not met. Indeed, in \cite{daPrato1975}
the operators have to verify, besides other spectral properties, a
condition of Hille-Yosida type, which needs not to be satisfied for
our choice of $A$ and $B$. \\
In \cite[Chapter 5]{Favini1999} Favini and Yagi studied so-called
degenerated differential equations on Banach spaces, which are of
the form $\left(TM-L\right)u=f,$ where $T,M$ and $L$ are operators
on some Banach space. Moreover, the resolvent sets of $T$ and $M$
should contain certain logarithmic regions, while $L$ is boundedly
invertible and certain compatibility conditions for $T,M$ and $L$
are required. In our case this would correspond to $T=\partial_{0,\rho},$
$M=M(\partial_{0,\rho}^{-1})$ and $L=-A,$ which in general do not
satisfy these assumptions. 

\end{enumerate}
\end{rem}
Moreover, we obtain that the solution operator $S_{\rho}$ does not
depend on the particular choice of the parameter $\rho$ as the following
proposition shows. Thus, we usually will omit the index $\rho$ in
$\partial_{0,\rho}$ and just write $\partial_{0}$ instead.
\begin{prop}[{\cite[Lemma 3.6]{Trostorff2013_stability}}]
\label{prop:independence} Let $\rho,\rho'\in\mathbb{R}$ with $\rho'>\rho$
and set $U\coloneqq\{z\in\mathbb{C}\,|\,\Re z\in[\rho,\rho']\}$.
Moreover, let $S:U\to L(H)$ be continuous, bounded and analytic on
$\interior U$ and $f\in H_{\rho}(\mathbb{R};H)\cap H_{\rho'}(\mathbb{R};H)$.
Then 
\[
\left(\mathcal{L}_{\rho}^{\ast}S(\i\m+\rho)\mathcal{L}_{\rho}f\right)(t)=\left(\mathcal{L}_{\rho'}^{\ast}S(\i\m+\rho')\mathcal{L}_{\rho'}f\right)(t)
\]
for almost every $t\in\mathbb{R}.$
\end{prop}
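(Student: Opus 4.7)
The plan is to express both sides as contour integrals along the vertical lines $\{\Re z = \rho\}$ and $\{\Re z = \rho'\}$, respectively, of the same analytic integrand, and then to shift the contour using Cauchy's integral theorem on the strip $U$. Concretely, invoking the definition $\mathcal{L}_\rho = \mathcal{F}e^{-\rho\m}$, I would first verify that for sufficiently nice $f$ one can rewrite
\[
 \bigl(\mathcal{L}_\rho^\ast S(\i\m+\rho)\mathcal{L}_\rho f\bigr)(t) = \frac{1}{2\pi\i}\int_{\rho-\i\infty}^{\rho+\i\infty} e^{zt}\,S(z)\,\tilde f(z)\dd z,
\]
where $\tilde f(z)\coloneqq\int_\mathbb{R} e^{-zs}f(s)\dd s$, and analogously at $\rho'$. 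Cauchy's theorem applied to a rectangle $[\rho,\rho']\times[-R,R]$ and letting $R\to\infty$ should then yield the claimed equality, provided the horizontal boundary contributions vanish.

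As a first step I would reduce to $f$ in the dense subclass $C_c^\infty(\mathbb{R};H)$, which is simultaneously dense in $H_\rho(\mathbb{R};H)$ and $H_{\rho'}(\mathbb{R};H)$ (standard cutoff-and-mollification, using that the weights $e^{-\rho s}$ and $e^{-\rho' s}$ are locally bounded and locally bounded away from zero). For such $f$ the Laplace transform $\tilde f$ is entire, and by repeated integration by parts it decays faster than any polynomial along the horizontal lines $\{\Im z = \pm R\}$, uniformly in $\Re z\in[\rho,\rho']$. Combined with the uniform bound on $S$ over $U$ and the estimate $|e^{zt}|\le e^{\max\{|\rho|,|\rho'|\}|t|}$ on the strip, this forces the top and bottom edges of the rectangle to vanish in the limit, so the contour-shift identity holds pointwise for every $t\in\mathbb{R}$.

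To pass from $C_c^\infty(\mathbb{R};H)$ to a general $f\in H_\rho(\mathbb{R};H)\cap H_{\rho'}(\mathbb{R};H)$, I would use that both operators $\mathcal{L}_\rho^\ast S(\i\m+\rho)\mathcal{L}_\rho$ and $\mathcal{L}_{\rho'}^\ast S(\i\m+\rho')\mathcal{L}_{\rho'}$ are bounded on $H_\rho(\mathbb{R};H)$ and $H_{\rho'}(\mathbb{R};H)$ respectively, by the unitarity of $\mathcal{L}_{(\cdot)}$ and the uniform bound on $S$. Approximating $f$ in both weighted norms simultaneously by a sequence $f_n\in C_c^\infty(\mathbb{R};H)$ and extracting a diagonal subsequence converging almost everywhere transfers the pointwise identity from $f_n$ to $f$. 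The main obstacle is the vanishing of the horizontal rectangle edges; this is exactly where the hypothesis that $S$ is bounded and continuous up to the boundary of the strip becomes indispensable. All remaining ingredients---the representation of the operators as Bromwich-type contour integrals, density of test functions in weighted $L_2$-spaces, and the limiting argument---are routine.
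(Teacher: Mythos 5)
The paper offers no proof of this proposition --- it is quoted from \cite[Lemma 3.6]{Trostorff2013_stability} --- but the argument given there is exactly the contour-shift you describe: write both sides as Bromwich integrals of $e^{zt}S(z)\hat{f}(z)$ (with $\hat{f}$ the two-sided Laplace transform) over the lines $\Re z=\rho$ and $\Re z=\rho'$, apply Cauchy's theorem on the rectangle for $f$ in a dense class of smooth compactly supported functions using the boundedness of $S$ on the closed strip to kill the horizontal edges, and pass to general $f\in H_{\rho}(\mathbb{R};H)\cap H_{\rho'}(\mathbb{R};H)$ via the boundedness of the two multiplication operators and an almost everywhere convergent subsequence. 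Your outline is correct and complete; the only cosmetic caveat is that for $f\in C_{c}^{\infty}(\mathbb{R};H)$ the identity holds for the continuous representatives, whereas the general claim is, as you state, only almost everywhere.
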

We are now ready to introduce the notion of an exponentially stable
evolutionary problem as it was defined in \cite{Trostorff2013_stability,Trostorff2014_PAMM}.
\begin{defn*}
A well-posed evolutionary problem is called \emph{exponentially stable}
\emph{with stability rate $\rho_{1}>0,$} if for each $0\leq\nu<\rho_{1}$
and $\rho>\omega_{0}(M,A)$ and $f\in H_{-\nu}(\mathbb{R};H)\cap H_{\rho}(\mathbb{R};H)$
we have that 
\[
\left(\overline{\partial_{0}M(\partial_{0}^{-1})+A}\right)^{-1}f\in H_{-\nu}(\mathbb{R};H).
\]
\end{defn*}
\begin{rem}
$\,$

\begin{enumerate}[(a)]

\item We note that we cannot use the standard notion of exponential
stability as it is used for instance in semigroup theory. There, one
usually requires point-wise estimates for the solution $u$ of the
form $|u(t)|\leq Me^{-\omega t}$ for some $M,\omega>0$ and each
$t\in\mathbb{R}_{\geq0}.$ The main problem is that we do not have
any regularizing property of our solution operator $\left(\overline{\partial_{0}M(\partial_{0}^{-1})+A}\right)^{-1}$
allowing to get continuous solutions. Thus, pointwise estimates cannot
be used in our framework. Indeed, choosing for instance $M(\partial_{0}^{-1})\coloneqq\partial_{0}^{-1}$
we end up with a purely algebraic equation given by 
\[
\left(1+A\right)u=f,
\]
where we cannot expect continuity of the solution $u$, unless our
right-hand side $f$ is more regular than just square integrable.
However, the notion of exponential stability as introduced above yields
a classical point-wise estimate of the solution, if the right-hand
side $f$ is regular enough, for example an element in the domain
of $\partial_{0}$ (see \cite[Remark 3.2 (a)]{Trostorff2013_stability}).

\item We further remark that exponential stability is not just the
requirement that we can solve the evolutionary problem for negative
$\rho$. The main problem is, that we need to preserve the causality
of the solution operator, which is a typical property for positive
but not for negative $\rho$ (compare $\partial_{0,\rho}^{-1}$ in
dependence of $\rho$). That is why we need to define the exponential
stability in terms of the causal solution operator, which is guaranteed
by the choice $\rho>\omega_{0}(M,A)$ in the latter definition. Indeed,
consider the simple case $A=0$ and $M(\partial_{0}^{-1})=1.$ Then
the corresponding evolutionary problem reads as 
\[
\partial_{0,\rho}u=f
\]
and we have $\omega(M,A)=\omega(1,0)=0.$ This problem is solvable
for positive and negative $\rho$, yielding however, different solutions.
Choosing for instance $f=\chi_{[0,1]}$ we get 
\begin{align*}
u(t) & =\begin{cases}
\intop_{-\infty}^{t}f(s)\mbox{ d}s & \mbox{ if }\rho>0,\\
-\intop_{t}^{\infty}f(s)\mbox{ d}s & \mbox{ if }\rho<0
\end{cases}\\
 & =\begin{cases}
t\chi_{[0,1]}(t)+\chi_{]1,\infty[}(t) & \mbox{ if }\rho>0,\\
-\left(\chi_{-]-\infty,0[}(t)+(1-t)\chi_{[0,1]}(t)\right) & \mbox{ if }\rho<0
\end{cases}
\end{align*}
for $t\in\mathbb{R}$, which shows that even if $f$ decays exponentially
(it is even compactly supported) the causal solution (corresponding
to positive $\rho$) is not exponentially decaying. 

\end{enumerate}
\end{rem}
In the subsequent theorem we will give a characterization of exponential
stability in terms of the resolvents of $zM(z^{-1})+A$ for $z\in\mathbb{C}_{\Re>-\rho_{1}}\setminus\{0\}.$
For doing so, we need the following auxiliary result. 
\begin{thm}[\cite{Weiss1991}]
 \label{thm:H_infty rep}Let $S:L_{2}(\mathbb{R}_{\geq0};H)\to L_{2}(\mathbb{R}_{\geq0};H)$
be a bounded, shift-invariant linear operator. Then there exists a
uniquely determined bounded and analytic function $N:\mathbb{C}_{\Re>0}\to L(H)$
such that 
\[
\left(\mathcal{L}_{\Re z}Sf\right)(\Im z)=N(z)\left(\mathcal{L}_{\Re z}f\right)(\Im z)
\]
for each $f\in L_{2}(\mathbb{R}_{\geq0};H)$ and every $z\in\mathbb{C}_{\Re>0}$.
\end{thm}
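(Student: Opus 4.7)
My plan is to transport $S$ through the Laplace transform to the vector-valued Hardy space $H^{2}(\mathbb{C}_{\Re>0};H)$ and then invoke the characterisation of analytic Toeplitz operators as multiplication by operator-valued $H^{\infty}$ functions.

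First, the Paley--Wiener setup. For $f\in L_{2}(\mathbb{R}_{\geq 0};H)$, extended by zero to $\mathbb{R}$, one has $f\in H_{\rho}(\mathbb{R};H)$ for every $\rho\geq 0$, and the classical Laplace transform $F_{f}(z):=\int_{0}^{\infty}e^{-zt}f(t)\,\dd t$ is holomorphic on $\mathbb{C}_{\Re>0}$ with $F_{f}(z)=\sqrt{2\pi}\,(\mathcal{L}_{\Re z}f)(\Im z)$. By the operator-valued Paley--Wiener theorem the map $f\mapsto F_{f}$ is (up to this constant factor) a unitary from $L_{2}(\mathbb{R}_{\geq 0};H)$ onto $H^{2}(\mathbb{C}_{\Re>0};H)$, and restricted to the vertical line $\Re\zeta=\rho$ it implements the isometry of $\mathcal{L}_{\rho}$. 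I next translate the shift-invariance hypothesis: for $h\geq 0$ let $\tau_{h}f:=f(\cdot-h)\chi_{[h,\infty[}$; shift-invariance of $S$ means $S\tau_{h}=\tau_{h}S$, and on the Hardy side $\tau_{h}$ becomes multiplication by $e^{-h\zeta}$, so the conjugated operator $\tilde{S}$ on $H^{2}(\mathbb{C}_{\Re>0};H)$ commutes with multiplication by every exponential $e^{-h\zeta}$, $h\geq 0$. Since the linear span of these exponentials is weak-$\ast$ dense in $H^{\infty}(\mathbb{C}_{\Re>0})$, $\tilde{S}$ then commutes with multiplication by every scalar $H^{\infty}$ function.

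The core step is to deduce from this that $\tilde{S}$ is itself multiplication by a bounded analytic $N:\mathbb{C}_{\Re>0}\to L(H)$. Following the pattern of the Sz.-Nagy--Foias analytic Toeplitz theorem, I would fix $z\in\mathbb{C}_{\Re>0}$ and a scalar reproducing element $\phi_{z}\in H^{2}(\mathbb{C}_{\Re>0})$ with $\phi_{z}(z)\neq 0$ and define $N(z)x:=(\tilde{S}(\phi_{z}\otimes x))(z)/\phi_{z}(z)$ for $x\in H$; commutation with scalar $H^{\infty}$-multiplications forces this to be independent of the choice of $\phi_{z}$ and yields the pointwise identity $(\tilde{S}F)(z)=N(z)F(z)$ for every $F\in H^{2}(\mathbb{C}_{\Re>0};H)$, which is precisely the claim once one translates back via the Laplace transform. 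Analyticity of $N$ follows from analyticity of $z\mapsto\tilde{S}(\phi_{z}\otimes x)(z)$, the bound $\|N\|_{\infty}\leq\|S\|$ from the Plancherel isometry on each vertical line, and uniqueness is immediate because $N(z)$ is determined pointwise by the action of $S$.

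The main obstacle is precisely this last passage in the vector-valued setting: one must verify that $N(z)$ is well-defined on all of $H$ and depends analytically on $z$ in the operator-norm sense (not merely weakly), and that commutation with every scalar $H^{\infty}$-multiplier actually forces a multiplier representation. The scalar counterpart rests on the fact that $\{M_{\varphi}:\varphi\in H^{\infty}\}$ is a maximal abelian subalgebra of $B(H^{2})$; in the vector-valued case a careful reproducing-kernel argument combined with the Paley--Wiener characterisation handles these points, while the remaining boundedness, uniqueness, and change-of-$\rho$ bookkeeping are routine.
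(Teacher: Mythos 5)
The paper contains no proof of this theorem: it is imported verbatim from \cite{Weiss1991}, so there is no internal argument to compare yours against; I can only assess your sketch on its own terms. What you outline is essentially the standard proof (and close in spirit to Weiss's own ``elementary'' one): Paley--Wiener identifies $L_{2}(\mathbb{R}_{\geq0};H)$ unitarily with $H^{2}(\mathbb{C}_{\Re>0};H)$ and intertwines $\mathcal{L}_{\rho}$ with restriction to the line $\Re\zeta=\rho$; shift-invariance becomes commutation with multiplication by $e^{-h\zeta}$; this extends to all scalar $H^{\infty}$ multipliers either by bounded weak-$\ast$ density of the span of the exponentials (dual to the fact that $g\in L_{1}(\i\mathbb{R})$ annihilates every $e^{-h\zeta}$ iff $g$ lies in the conjugate Hardy class) combined with WOT-closedness of commutants, or more directly by writing the reproducing kernels $k_{w}(\zeta)=(\zeta+\overline{w})^{-1}$ as Bochner integrals of the exponentials; and the definition $N(z)x=(\tilde{S}(\phi_{z}\otimes x))(z)/\phi_{z}(z)$ then yields the multiplier representation. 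The verifications you defer are indeed routine: independence of the choice of $\phi_{z}$ follows from $\phi(z)\,(\tilde{S}(\psi\otimes x))(z)=\psi(z)\,(\tilde{S}(\phi\otimes x))(z)$ for bounded $\phi,\psi\in H^{2}$; boundedness of $N(z)$ and the identity $(\tilde{S}F)(z)=N(z)F(z)$ for general $F$ follow from continuity of point evaluation on $H^{2}$ and density of finite sums of elementary tensors; weak analyticity plus local boundedness gives operator-norm analyticity; and the uniform bound $\sup_{z}\|N(z)\|\leq\|S\|$ is obtained most cleanly from the adjoint identity $\tilde{S}^{\ast}(k_{z}\otimes y)=k_{z}\otimes N(z)^{\ast}y$, which avoids the boundary-value and maximum-principle detour that your ``Plancherel on each vertical line'' remark would otherwise require. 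So: correct plan, no missing idea, but the crux (that commutation with scalar $H^{\infty}$ multipliers forces the vector-valued multiplier representation) is asserted in outline rather than carried out, so as written it is a proof strategy rather than a complete proof.
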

Now we are ready to state our characterization result.
\begin{thm}
\label{thm:characterization_exp_stability}Let $M:\mathbb{C}\setminus B[-r,r]\to L(H)$
be analytic and $0<\rho_{1}<\frac{1}{2r}$. We assume that the evolutionary
problem 
\[
\left(\partial_{0}M(\partial_{0}^{-1})+A\right)u=f
\]
is well-posed. Then the following statements are equivalent:

\begin{enumerate}[(i)]

\item The evolutionary problem $\left(\partial_{0}M(\partial_{0}^{-1})+A\right)u=f$
is exponentially stable with stability rate $\rho_{1},$

\item For each $z\in\mathbb{C}_{\Re>-\rho_{1}}\setminus\{0\}$ we
have $0\in\rho\left(zM(z^{-1})+A\right)$ and the function 
\begin{align*}
\mathbb{C}_{\Re>-\rho_{1}}\setminus\{0\} & \to L(H)\\
z & \mapsto\left(zM(z^{-1})+A\right)^{-1}
\end{align*}
is bounded.

\end{enumerate}\end{thm}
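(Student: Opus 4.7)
My approach is to transfer the problem to the Fourier--Laplace side via \prettyref{lem:mult_operator} and \prettyref{eq:unitary_eq}, which identifies the causal solution operator with the multiplier by the symbol $S(z):=(zM(z^{-1})+A)^{-1}$. Under this dictionary, exponential stability with rate $\rho_1$ translates to bounded-multiplier behaviour of $S$ on vertical lines to the left of the imaginary axis, which is exactly the content of (ii). The two directions then divide naturally: \prettyref{prop:independence} handles $(ii)\Rightarrow(i)$ by contour shifting, while \prettyref{thm:H_infty rep} (Weiss) handles $(i)\Rightarrow(ii)$ by producing a bounded analytic symbol.

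For $(ii)\Rightarrow(i)$, fix $\nu\in[0,\rho_1)$, $\rho>\omega_0(M,A)$ and $f\in H_{-\nu}(\mathbb{R};H)\cap H_\rho(\mathbb{R};H)$. Assumption (ii) makes $S$ analytic and bounded on $\mathbb{C}_{\Re>-\rho_1}\setminus\{0\}$; applying the operator-valued Riemann removable-singularity theorem (testing with $\langle S(\cdot)x|y\rangle$ for $x,y\in H$ and using the uniform bound) extends $S$ analytically across $z=0$ to all of $\mathbb{C}_{\Re>-\rho_1}$. The closed strip $U=\{z:-\nu\leq\Re z\leq\rho\}$ therefore lies inside the domain of analyticity, so \prettyref{prop:independence} yields
\[
(\mathcal{L}_{-\nu}^{\ast}S(\i\m-\nu)\mathcal{L}_{-\nu}f)(t)=(\mathcal{L}_\rho^{\ast}S(\i\m+\rho)\mathcal{L}_\rho f)(t)
\]
for almost every $t\in\mathbb{R}$. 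By \prettyref{eq:unitary_eq} the right-hand side is the causal solution $u=(\overline{\partial_0 M(\partial_0^{-1})+A})^{-1}f$, while the left-hand side lies in $H_{-\nu}(\mathbb{R};H)$ because $\mathcal{L}_{-\nu}$ is unitary from $H_{-\nu}$ to $L_2$ and $S(\i\m-\nu)$ is a bounded multiplier on $L_2(\mathbb{R};H)$.

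For $(i)\Rightarrow(ii)$, exponential stability combined with a closed-graph argument on the dense common subspace $C_c^\infty(\mathbb{R};H)\subset H_{-\nu}\cap H_\rho$ promotes the causal solution operator for $\rho>\omega_0$ to a bounded, causal, shift-invariant operator $T_{-\nu}\in L(H_{-\nu}(\mathbb{R};H))$ for each $\nu\in[0,\rho_1)$. Conjugating by the unitary $e^{\nu\m}$ and restricting to $L_2(\mathbb{R}_{\geq0};H)$, which is legitimate by causality, brings us into the setting of \prettyref{thm:H_infty rep}, yielding a bounded analytic symbol $N_\nu:\mathbb{C}_{\Re>-\nu}\to L(H)$ representing $T_{-\nu}$. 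By \prettyref{eq:unitary_eq}, $N_\nu$ agrees with $S$ on the well-posedness region $\mathbb{C}_{\Re>\rho_0}$, and one identifies $N_\nu(z)=(zM(z^{-1})+A)^{-1}$ on the whole of $\mathbb{C}_{\Re>-\nu}\setminus\{0\}$. Letting $\nu\uparrow\rho_1$ gives existence of the resolvent on $\mathbb{C}_{\Re>-\rho_1}\setminus\{0\}$, and a Phragm\'en--Lindel\"of argument on the vertical strips, with bounds known on the boundary lines $\Re z=-\nu$ and $\Re z=\rho_0$, transfers these line-bounds into the uniform half-plane bound required in (ii).

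The main obstacle is the identification $N_\nu=S$ beyond the well-posedness region. First, for $h\in D(A)$ the map $z\mapsto N_\nu(z)(zM(z^{-1})+A)h$ is $H$-valued analytic on $\mathbb{C}_{\Re>-\nu}\setminus\{0\}$ and equals $h$ on $\mathbb{C}_{\Re>\rho_0}$, so the identity principle gives $N_\nu(z)(zM(z^{-1})+A)h=h$ throughout, yielding injectivity of $zM(z^{-1})+A$ together with the left-inverse identity. Surjectivity is obtained by a connectedness argument: the set $\Omega$ of $z\in\mathbb{C}_{\Re>-\nu}\setminus\{0\}$ at which $(zM(z^{-1})+A)^{-1}$ exists and coincides with $N_\nu(z)$ is open by standard resolvent continuity and closed by closedness of $A$ together with continuity of $z\mapsto zM(z^{-1})$ and $z\mapsto N_\nu(z)$, so that a limit of data--solution pairs stays in the graph of $A$. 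Since $\mathbb{C}_{\Re>\rho_0}\subset\Omega$ and $\mathbb{C}_{\Re>-\nu}\setminus\{0\}$ is connected, $\Omega$ exhausts the latter; this is the step where the non-trivial analytic content of the proof sits.
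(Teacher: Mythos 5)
Your overall architecture coincides with the paper's: Weiss's theorem (\prettyref{thm:H_infty rep}) plus an openness/closedness argument on the set where $zM(z^{-1})+A$ is invertible for (i)$\Rightarrow$(ii), and the removable singularity at $0$ together with \prettyref{prop:independence} for (ii)$\Rightarrow$(i). Your (ii)$\Rightarrow$(i) is essentially the paper's. Within (i)$\Rightarrow$(ii), your identity-principle argument giving the left-inverse identity $N_\nu(z)\left(zM(z^{-1})+A\right)h=h$ on $D(A)$, and your proof that $\Omega$ is closed via the closedness of $A$, are correct and are acceptable variants of the paper's Neumann-series computation \prettyref{eq:Neumann}.

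There is, however, a genuine gap in the final step of (i)$\Rightarrow$(ii). You apply Weiss's theorem separately at each level $\nu<\rho_1$, obtaining symbols $N_\nu$ bounded on $\mathbb{C}_{\Re>-\nu}$ by constants $C_\nu=\|T_{-\nu}\|$, and then propose to let $\nu\uparrow\rho_1$ and recover the uniform bound on $\mathbb{C}_{\Re>-\rho_1}\setminus\{0\}$ by Phragm\'en--Lindel\"of on vertical strips. This cannot work as stated: the only bounds available on the left boundary lines $\Re z=-\nu$ are the constants $C_\nu$, and nothing in your argument controls $\sup_{\nu<\rho_1}C_\nu$; Phragm\'en--Lindel\"of only returns the maximum of the boundary bounds, so it reproduces $C_\nu$ and gives no control near the line $\Re z=-\rho_1$. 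Your union over $\nu$ does yield invertibility of $zM(z^{-1})+A$ on all of $\mathbb{C}_{\Re>-\rho_1}\setminus\{0\}$, but the boundedness assertion of (ii) --- the quantitatively essential half --- is not established. The paper avoids this by performing the conjugation with the full weight $e^{\rho_1\m}$ once: it shows that $S=e^{\rho_1\m}\bigl(\overline{\partial_0M(\partial_0^{-1})+A}\bigr)^{-1}e^{-\rho_1\m}$ is a closed, everywhere defined, hence bounded, shift-invariant operator on $L_2(\mathbb{R}_{\geq0};H)$ and applies \prettyref{thm:H_infty rep} to it a single time; the resulting symbol $N$ is bounded on all of $\mathbb{C}_{\Re>0}$ by one constant, and the identification $\left(zM(z^{-1})+A\right)^{-1}=N(z+\rho_1)$, propagated exactly by your connectedness argument, then delivers the uniform bound on $\mathbb{C}_{\Re>-\rho_1}\setminus\{0\}$. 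Replacing your family $(T_{-\nu})_{\nu<\rho_1}$ by this single operator at weight $\rho_1$ closes the gap.
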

\begin{proof}
Since our evolutionary problem is assumed to be well-posed, there
is $\rho_{0}\in\mathbb{R}$ such that 
\[
\mathbb{C}_{\Re>\rho_{0}}\setminus\{0\}\ni z\mapsto(zM(z^{-1})+A)^{-1}\in L(H)
\]
is bounded and analytic.\\
(i)$\Rightarrow$(ii): The proof is done in 3 steps.\\
Step 1: We show that the operator 
\[
S\coloneqq e^{\rho_{1}\m}\left(\overline{\partial_{0}M(\partial_{0}^{-1})+A}\right)^{-1}e^{-\rho_{1}\m}:L_{2}(\mathbb{R}_{\geq0};H)\to L_{2}(\mathbb{R}_{\geq0};H)
\]
satisfies the assumptions of \prettyref{thm:H_infty rep} and thus,
there is $N:\mathbb{C}_{\Re>0}\to L(H)$ analytic and bounded such
that 
\[
\left(\mathcal{L}_{\Re z}Sf\right)(\Im z)=N(z)\left(\mathcal{L}_{\Re z}f\right)(\Im z)
\]
for each $f\in L_{2}(\mathbb{R}_{\geq0};H)$ and every $z\in\mathbb{C}_{\Re>0}$.
\\
Indeed, $S$ is well-defined since for $f\in L_{2}(\mathbb{R}_{\geq0};H)$
it follows that $e^{-\rho_{1}\m}f\in\bigcap_{\rho\geq-\rho_{1}}H_{\rho}(\mathbb{R}_{\geq0};H),$
and thus, by assumption and the causality of $\left(\overline{\partial_{0}M(\partial_{0}^{-1})+A}\right)^{-1},$
we obtain that $Sf\in L_{2}(\mathbb{R}_{\geq0};H).$ Moreover $S$
is closed. Indeed, let $(f_{n})_{n\in\mathbb{N}}$ be a sequence in
$L_{2}(\mathbb{R}_{\geq0};H)$ such that $f_{n}\to f$ and $Sf_{n}\to g$
in $L_{2}(\mathbb{R}_{\geq0};H)$ for some $f,g\in L_{2}(\mathbb{R}_{\geq0};H)$.
Consequently $e^{-\rho_{1}\m}f_{n}\to e^{-\rho_{1}\m}f$ in $H_{\rho}(\mathbb{R}_{\geq0};H)$
for each $\rho\geq-\rho_{1}.$ By the boundedness of $\left(\overline{\partial_{0}M(\partial_{0}^{-1})+A}\right)^{-1}$
on $H_{\rho}(\mathbb{R}_{\geq0};H)$ for $\rho>\rho_{0}$ we derive
that 
\[
\left(\overline{\partial_{0}M(\partial_{0}^{-1})+A}\right)^{-1}e^{-\rho_{1}\m}f_{n}\to\left(\overline{\partial_{0}M(\partial_{0}^{-1})+A}\right)^{-1}e^{-\rho_{1}\m}f
\]
in $H_{\rho}(\mathbb{R}_{\geq0};H)$ and hence, $Sf_{n}\to e^{\rho_{1}\m}\left(\overline{\partial_{0}M(\partial_{0}^{-1})+A}\right)^{-1}e^{-\rho_{1}\m}f=Sf$
in $H_{\rho+\rho_{1}}(\mathbb{R}_{\geq0};H)$. As $L_{2}(\mathbb{R}_{\geq0};H)\hookrightarrow H_{\rho}(\mathbb{R}_{\geq0};H)$
for each $\rho\geq0,$ we derive that $g=Sf$ and thus, $S$ is closed.
Hence, according to the closed graph theorem, we get that $S$ is
bounded. Since $S$ is obviously translation invariant, by \prettyref{thm:H_infty rep}
there exists a unique analytic and bounded function $N:\mathbb{C}_{\Re>0}\to L(H)$
such that 
\[
\left(\mathcal{L}_{\Re z}Sf\right)(\Im z)=N(z)\left(\mathcal{L}_{\Re z}f\right)(\Im z)
\]
for each $f\in L_{2}(\mathbb{R}_{\geq0};H)$ and every $z\in\mathbb{C}_{\Re>0}.$
\\
Step 2: We show that $N(z+\rho_{1})=\left(zM\left(z^{-1}\right)+A\right)^{-1}$
for each $z\in\mathbb{C}_{\Re>\rho_{0}}.$ \\
Since for $\rho>\rho_{0}+\rho_{1}$ we have that (cp. \prettyref{eq:unitary_eq})
\begin{align*}
\left(\mathcal{L}_{\rho}Sf\right)(t) & =\left(\mathcal{L}_{\rho}e^{\rho_{1}\m}\left(\overline{\partial_{0}M(\partial_{0}^{-1})+A}\right)^{-1}e^{-\rho_{1}\m}f\right)(t)\\
 & =\left(\mathcal{L}_{\rho-\rho_{1}}\left(\overline{\partial_{0}M(\partial_{0}^{-1})+A}\right)^{-1}e^{-\rho_{1}\m}f\right)(t)\\
 & =\left((\i t+\rho-\rho_{1})M\left(\frac{1}{\i t+\rho-\rho_{1}}\right)+A\right)^{-1}\left(\mathcal{L}_{\rho-\rho_{1}}e^{-\rho_{1}\m}f\right)(t),\\
 & =\left((\i t+\rho-\rho_{1})M\left(\frac{1}{\i t+\rho-\rho_{1}}\right)+A\right)^{-1}\left(\mathcal{L}_{\rho}f\right)(t)
\end{align*}
for every $f\in L_{2}(\mathbb{R}_{\geq0};H)$, we derive that 
\[
N(z+\rho_{1})=\left(zM\left(z^{-1}\right)+A\right)^{-1}
\]
for each $z\in\mathbb{C}_{\Re>\rho_{0}}.$ 

Step 3: We show (ii), i.e. $0\in\rho(zM(z^{-1})+A)$ for each $z\in\mathbb{C}_{\Re>-\rho_{1}}\setminus\{0\}.$
\\
We consider 
\[
\Omega\coloneqq\left\{ \left.z\in\mathbb{C}_{\Re>-\rho_{1}}\setminus\{0\}\,\right|\,0\in\rho(zM(z^{-1})+A)\right\} \subseteq\mathbb{C}_{\Re>-\rho_{1}}\setminus\{0\}.
\]
We first prove that $\Omega$ is open. For doing so let $z'\in\Omega$.
Then for $z\in\mathbb{C}_{\Re>-\rho_{1}}\setminus\{0\}$ we compute
\begin{align}
zM(z^{-1})+A & =zM(z{}^{-1})-z'M(z'^{-1})+z'M(z'^{-1})+A\nonumber \\
 & =\left(\left(zM(z^{-1})-z'M(z'^{-1})\right)\left(z'M(z'^{-1})+A\right)^{-1}+1\right)\left(z'M(z'^{-1})+A\right).\label{eq:Neumann}
\end{align}
Due to the continuity of $M$ there exists $\delta>0$ such that for
$|z-z'|<\delta$ we have that 
\[
\left\Vert \left(zM(z^{-1})-z'M(z'^{-1})\right)\left(z'M(z'^{-1})+A\right)^{-1}\right\Vert <1.
\]
Hence, by the Neumann series, $0\in\rho(zM(z^{-1})+A)$ for each $|z-z'|<\delta,$
showing that $\Omega$ is open. \\
We consider now the component $C$ of $\rho_{0}+1\in\Omega$ in $\Omega.$
This component is open, since $\Omega$ is open. Moreover, due to
the identity theorem, we obtain that $N(z+\rho_{1})=\left(zM\left(z^{-1}\right)+A\right)^{-1}$
for each $z\in C.$ We will show that $C$ is also closed in $\mathbb{C}_{\Re>-\rho_{1}}\setminus\{0\}$.
For doing so, let $(z_{n})_{n\in\mathbb{N}}$ be a sequence in $C$
converging to some $z\in\mathbb{C}_{\Re>-\rho_{1}}\setminus\{0\}$.
Since 
\[
\sup_{n\in\mathbb{N}}\left\Vert \left(z_{n}M\left(z_{n}^{-1}\right)+A\right)^{-1}\right\Vert =\sup_{n\in\mathbb{N}}\|N(z_{n}+\rho_{1})\|<\infty,
\]
we obtain by \prettyref{eq:Neumann} (replace $z'$ by $z_{n}$) that
$z\in\Omega.$ Since $\Omega$ is open we find $\varepsilon>0$ with
$B(z,\varepsilon)\subseteq\Omega.$ Moreover, there is $n\in\mathbb{N}$
with $z_{n}\in B(z,\varepsilon)$ showing that $z$ and $z_{n}$ belong
to the same component $C$. Hence, $C$ is clopen in $\mathbb{C}_{\Re>-\rho_{1}}\setminus\{0\}$
and thus, $C=\mathbb{C}_{\Re>-\rho_{1}}\setminus\{0\}.$ The latter
gives $\Omega=\mathbb{C}_{\Re>-\rho_{1}}\setminus\{0\}$ and by the
identity theorem we conclude 
\[
(zM(z^{-1})+A)^{-1}=N(z+\rho_{1})\in L(H)
\]
for each $z\in\mathbb{C}_{\Re>-\rho_{1}}\setminus\{0\}$.\\
(ii)$\Rightarrow$(i): By assumption, the function 
\begin{align*}
S:\mathbb{C}_{\Re>-\rho_{1}}\setminus\{0\} & \to L(H)\\
z & \mapsto\left(zM(z^{-1})+A\right)^{-1}
\end{align*}
is analytic and bounded and hence, its singularity in $0$ is removable.
We denote its analytic extension to $\mathbb{C}_{\Re>-\rho_{1}}$
again by $S$. Consequently, for each $\rho>-\rho_{0}$ the multiplication
operator $S(\i\m+\rho)$ is bounded. Its inverse is given by the multiplication
$T(\i\m+\rho)$, where $T(z)\coloneqq(zM(z^{-1})+A).$ Hence, using
\prettyref{lem:mult_operator}, we obtain for each $\rho>-\rho_{1}$
\[
S(\i\m+\rho)=\left(\overline{\left(\i\m+\rho\right)M\left(\frac{1}{\i\m+\rho}\right)+A}\right)^{-1}.
\]
Let now $f\in H_{-\nu}(\mathbb{R};H)\cap H_{\rho}(\mathbb{R};H)$
for some $0\leq\nu<-\rho_{1}$ and $\rho>\omega_{0}(M,A).$ Then,
by \prettyref{prop:independence} 
\[
\left(\overline{\partial_{0,\rho}M(\partial_{0,\rho}^{-1})+A}\right)^{-1}f=\mathcal{L}_{\rho}^{\ast}S(\i\m+\rho)\mathcal{L}_{\rho}f=\mathcal{L}_{-\nu}^{\ast}S(\i\m-\nu)\mathcal{L}_{-\nu}f\in H_{-\nu}(\mathbb{R};H).\tag*{\qedhere}
\]

\end{proof}
As immediate consequences of the latter theorem we obtain the following
two corollaries.
\begin{cor}
Let $M:\mathbb{C}\setminus B[-r,r]\to L(H)$ be analytic and assume
that the corresponding evolutionary problem \prettyref{eq:evo1} is
well-posed. Then it is exponentially stable with stability rate $0<\rho_{1}<\frac{1}{2r}$
if and only if $\omega_{0}(M,A)\leq-\rho_{1}.$ \end{cor}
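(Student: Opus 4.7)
The plan is to derive this corollary directly from \prettyref{thm:characterization_exp_stability} together with the definition of the growth bound $\omega_0(M,A)$.

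For the ``only if'' direction, assume the problem is exponentially stable with rate $\rho_1$. Then by \prettyref{thm:characterization_exp_stability} (i)$\Rightarrow$(ii), the map $z\mapsto\left(zM(z^{-1})+A\right)^{-1}$ is defined and bounded on $\mathbb{C}_{\Re>-\rho_1}\setminus\{0\}$. But this says precisely that $\rho_0=-\rho_1$ is admissible in the definition of well-posedness, so $\omega_0(M,A)\leq-\rho_1$.

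For the ``if'' direction, assume $\omega_0(M,A)\leq-\rho_1$. Given $\nu\in[0,\rho_1)$, $\rho>\omega_0(M,A)$ and $f\in H_{-\nu}(\mathbb{R};H)\cap H_\rho(\mathbb{R};H)$, I would choose an intermediate rate $\rho_1'\in(\nu,\rho_1)$. Then $\rho_1'<\rho_1<\frac{1}{2r}$, and $-\rho_1'>-\rho_1\geq\omega_0(M,A)$, so by the definition of the growth bound the resolvent $z\mapsto(zM(z^{-1})+A)^{-1}$ is bounded on $\mathbb{C}_{\Re>-\rho_1'}\setminus\{0\}$. Applying \prettyref{thm:characterization_exp_stability} (ii)$\Rightarrow$(i) with $\rho_1'$ in place of $\rho_1$ yields exponential stability with rate $\rho_1'$, and since $\nu<\rho_1'$ this forces $\left(\overline{\partial_0 M(\partial_0^{-1})+A}\right)^{-1}f\in H_{-\nu}(\mathbb{R};H)$, as required.

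The only subtle point is the boundary case $\omega_0(M,A)=-\rho_1$, where one cannot apply the characterization theorem with $\rho_1$ itself (the resolvent need not be bounded on the open right half-plane strictly larger than the growth strip). Sliding to a slightly smaller rate $\rho_1'<\rho_1$ resolves this, and is compatible with the definition of exponential stability because that definition only quantifies over $\nu<\rho_1$. The rest is bookkeeping.
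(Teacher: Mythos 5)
Your proof is correct and follows the same route as the paper: both directions are read off from \prettyref{thm:characterization_exp_stability} together with the definition of $\omega_{0}(M,A)$. The one place where you genuinely improve on the paper's argument is the boundary case. The paper's proof simply declares that boundedness of $z\mapsto(zM(z^{-1})+A)^{-1}$ on $\mathbb{C}_{\Re>-\rho_{1}}\setminus\{0\}$ ``is nothing as to say that $\omega_{0}(M,A)\leq-\rho_{1}$''; only one implication there is immediate, since $\omega_{0}(M,A)$ is an \emph{infimum} and the case $\omega_{0}(M,A)=-\rho_{1}$ does not by itself give a uniform bound on the full half-plane $\mathbb{C}_{\Re>-\rho_{1}}$. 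Your device of fixing $\nu<\rho_{1}$, interpolating $\rho_{1}'\in\left]\nu,\rho_{1}\right[$ so that $-\rho_{1}'>\omega_{0}(M,A)$ is strictly admissible, and invoking (ii)$\Rightarrow$(i) at the rate $\rho_{1}'$ closes exactly this gap, and it is compatible with the definition of exponential stability precisely because that definition only quantifies over $\nu<\rho_{1}$. So: same approach, but your version of the ``if'' direction is the more careful one.
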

\begin{proof}
According to \prettyref{thm:characterization_exp_stability}, the
evolutionary problem is exponentially stable with stability rate $\rho_{1}$
if and only if the function 
\[
\mathbb{C}_{\Re>-\rho_{1}}\setminus\{0\}\ni z\mapsto\left(zM(z^{-1})+A\right)^{-1}\in L(H)
\]
is bounded, which is nothing as to say that $\omega_{0}(M,A)\leq-\rho_{1}.$\end{proof}
\begin{cor}
\label{cor:perturbation}Let $M:\mathbb{C}\setminus B[-r,r]\to L(H)$
be analytic and assume that the evolutionary problem 
\[
\left(\partial_{0}M(\partial_{0}^{-1})+A\right)u=f
\]
is well-posed and exponentially stable with stability rate $0<\rho_{1}<\frac{1}{2r}$.
Let 
\[
C\coloneqq\sup_{z\in\mathbb{C}_{\Re>-\rho_{1}}\setminus\{0\}}\left\Vert \left(zM(z^{-1})+A\right)^{-1}\right\Vert ,
\]
which is finite according to \prettyref{thm:characterization_exp_stability}.
Let $N:\mathbb{C}\setminus B[-r,r]\to L(H)$ be analytic and bounded
such that 
\[
\|N\|_{\infty}\coloneqq\sup_{z\in\mathbb{C}\setminus B\left[-\frac{1}{2\rho_{1}},\frac{1}{2\rho_{1}}\right]}\|N(z)\|<\frac{1}{C}.
\]
Then the evolutionary problem 
\[
\left(\partial_{0}\left(M(\partial_{0}^{-1})+\partial_{0}^{-1}N(\partial_{0}^{-1})\right)+A\right)u=f
\]
is well-posed and exponentially stable with stability rate $\rho_{1}.$\end{cor}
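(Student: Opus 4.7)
The plan is to apply \prettyref{thm:characterization_exp_stability} to the perturbed material law $\tilde{M}(z)\coloneqq M(z)+zN(z)$, which is analytic on $\mathbb{C}\setminus B[-r,r]$ since both $M$ and $N$ are. The useful identity is $z\tilde{M}(z^{-1})+A=zM(z^{-1})+N(z^{-1})+A$, which lets us factor
\[
z\tilde{M}(z^{-1}) + A = \bigl(I + N(z^{-1})(zM(z^{-1})+A)^{-1}\bigr)\bigl(zM(z^{-1}) + A\bigr),
\]
so the task reduces to inverting the bracketed factor uniformly for $z\in\mathbb{C}_{\Re>-\rho_1}\setminus\{0\}$.

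The main nontrivial step, and the reason the hypothesis on $N$ is phrased in terms of the specific disk $B[-\tfrac{1}{2\rho_1},\tfrac{1}{2\rho_1}]$, is the geometric observation that the M\"obius map $z\mapsto z^{-1}$ sends $\mathbb{C}_{\Re>-\rho_1}\setminus\{0\}$ bijectively onto the exterior of that closed disk. A short direct computation yields $\bigl|z^{-1}+\tfrac{1}{2\rho_1}\bigr|^{2}-\tfrac{1}{4\rho_1^{2}}=(\rho_1+\Re z)/(\rho_1|z|^{2})$, which is positive exactly when $\Re z>-\rho_1$. Because $\rho_1<1/(2r)$ implies $B[-r,r]\subset B[-\tfrac{1}{2\rho_1},\tfrac{1}{2\rho_1}]$, the image stays inside the domain of $N$, and by hypothesis $\|N(z^{-1})\|\le\|N\|_\infty<1/C$ on the entire relevant region.

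The remainder is a routine Neumann-series argument. Combining the two bounds gives $\|N(z^{-1})(zM(z^{-1})+A)^{-1}\|\le\|N\|_\infty C<1$ uniformly on $\mathbb{C}_{\Re>-\rho_1}\setminus\{0\}$, so the bracketed factor is invertible there and we obtain the uniform estimate
\[
\bigl\|(z\tilde{M}(z^{-1})+A)^{-1}\bigr\|\le\frac{C}{1-\|N\|_\infty C}.
\]
Restricted to $\mathbb{C}_{\Re>0}\setminus\{0\}$ this yields well-posedness of the perturbed problem, and the uniform bound on the full region $\mathbb{C}_{\Re>-\rho_1}\setminus\{0\}$ is precisely condition (ii) of \prettyref{thm:characterization_exp_stability}, so the perturbed problem is exponentially stable with rate $\rho_1$.
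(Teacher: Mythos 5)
Your proof is correct and follows essentially the same route as the paper: the factorization $zM(z^{-1})+N(z^{-1})+A=\bigl(N(z^{-1})(zM(z^{-1})+A)^{-1}+1\bigr)(zM(z^{-1})+A)$, a Neumann-series inversion giving the uniform bound $C/(1-\|N\|_{\infty}C)$, and an appeal to \prettyref{thm:characterization_exp_stability}. The only difference is that you make explicit the (correct) geometric fact that $z\mapsto z^{-1}$ carries $\mathbb{C}_{\Re>-\rho_{1}}\setminus\{0\}$ onto the exterior of $B\left[-\frac{1}{2\rho_{1}},\frac{1}{2\rho_{1}}\right]$, which the paper leaves implicit.
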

\begin{proof}
Using the equality 
\[
zM(z^{-1})+N(z^{-1})+A=\left(N(z^{-1})\left(zM(z^{-1})+A\right)^{-1}+1\right)\left(zM(z^{-1})+A\right)
\]
we obtain that \foreignlanguage{english}{$zM(z^{-1})+N(z^{-1})+A$}
is boundedly invertible with 
\[
\sup_{z\in\mathbb{C}\setminus B\left[-\frac{1}{2\rho_{1}},\frac{1}{2\rho_{1}}\right]}\left\Vert \left(zM(z^{-1})+N(z^{-1})+A\right)^{-1}\right\Vert \leq\frac{C}{1-\|N\|_{\infty}C}.
\]
The assertion now follows from \prettyref{thm:characterization_exp_stability}.
\end{proof}

\section{Second order problems and exponential decay}

Frequently, hyperbolic problems occurring in mathematical physics
are given as a differential equation of second order in time and space.
To obtain an exponential decay one has to assume suitable boundary
conditions, making the spatial operator (e.g. the Dirichlet-Laplacian)
continuously invertible. Following our solution theory, we have to
reformulate the problem as a first order problem. As it turns out,
there are several possibilities to do this, allowing to introduce
an additional parameter. %

We begin to state an exponential stability result for evolutionary
equations, where the spatial operator $A$ is assumed to be invertible.
\begin{prop}
\label{prop:exp_decay_invertible_A}Let $H$ be a Hilbert space, $A:D(A)\subseteq H\to H$
m-accretive and continuously invertible and $r>0$. Moreover, let
$M:\mathbb{C}\setminus B\left[-r,r\right]\to L(H)$ be analytic and
assume that there exists $\delta\in[0,\frac{1}{2r}[$ such that 
\begin{equation}
K\coloneqq\sup_{z\in B[0,\delta]\setminus\{0\}}\|zM(z^{-1})\|<\|A^{-1}\|^{-1}\label{eq:bounded}
\end{equation}
and 
\begin{equation}
\exists c>0,0<\rho_{0}<\frac{1}{2r}\;\forall z\in\mathbb{C}_{\Re>-\rho_{0}}\setminus B[0,\delta]:\Re zM(z^{-1})\geq c.\label{eq:pos_def}
\end{equation}
Then the evolutionary problem 
\[
\left(\partial_{0}M(\partial_{0}^{-1})+A\right)u=f
\]
is well-posed and exponentially stable.\end{prop}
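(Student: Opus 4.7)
The approach is to reduce to \prettyref{thm:characterization_exp_stability}. Taking $\rho_1 := \rho_0$ (the number from \prettyref{eq:pos_def}, which already satisfies $0 < \rho_1 < \frac{1}{2r}$), it suffices to show that $z \mapsto (zM(z^{-1}) + A)^{-1}$ is well-defined and uniformly bounded on $\mathbb{C}_{\Re > -\rho_0} \setminus \{0\}$. Well-posedness comes along for free, since the region in question contains an entire right half-plane. I split this domain into a small-$z$ region $\Omega_1 := (\mathbb{C}_{\Re > -\rho_0} \cap B[0,\delta]) \setminus \{0\}$ and the bulk $\Omega_2 := \mathbb{C}_{\Re > -\rho_0} \setminus B[0,\delta]$, and bound the resolvent separately on each.

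On $\Omega_1$, the bounded invertibility of $A$ together with \prettyref{eq:bounded} does the work. Factoring
\[
zM(z^{-1}) + A = \bigl(zM(z^{-1})A^{-1} + 1\bigr)\,A
\]
and noting $\|zM(z^{-1})A^{-1}\| \leq K\|A^{-1}\| < 1$, the Neumann series gives bounded invertibility with the uniform estimate $\|(zM(z^{-1}) + A)^{-1}\| \leq \|A^{-1}\|/(1 - K\|A^{-1}\|)$.

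On $\Omega_2$ I use \prettyref{eq:pos_def} together with m-accretivity of $A$. For each such $z$ the bounded operator $zM(z^{-1})$ is accretive with $\Re zM(z^{-1}) \geq c$, so the standard bounded-perturbation theorem for m-accretive operators (one checks surjectivity of $A + zM(z^{-1}) + \lambda$ for $\lambda$ large via $\|zM(z^{-1})(A+\lambda)^{-1}\| \leq \|zM(z^{-1})\|/\lambda < 1$) shows $A + zM(z^{-1})$ is m-accretive. The resulting coercivity
\[
\Re \langle (A + zM(z^{-1}))x \mid x \rangle \geq c|x|^{2} \quad (x \in D(A))
\]
combined with m-accretivity then yields continuous invertibility with $\|(A + zM(z^{-1}))^{-1}\| \leq 1/c$, uniformly on $\Omega_2$. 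Putting the two bounds together and appealing to \prettyref{thm:characterization_exp_stability} finishes the proof. The main obstacle is the $\Omega_2$ step: one must both upgrade accretivity of the sum to m-accretivity (bounded perturbation) and promote the one-sided coercivity to genuine surjectivity, the latter via a weak-compactness argument on the approximants $(T + 1/n)^{-1} y$, extracting a weak limit and invoking closedness of the graph of $T = A + zM(z^{-1})$.
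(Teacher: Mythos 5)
Your proposal is correct and follows essentially the same route as the paper: split $\mathbb{C}_{\Re>-\rho_{0}}\setminus\{0\}$ into the punctured ball $B[0,\delta]\setminus\{0\}$, where \prettyref{eq:bounded} and a Neumann-series argument give the uniform bound $\|A^{-1}\|/(1-K\|A^{-1}\|)$, and its complement, where \prettyref{eq:pos_def} together with the m-accretivity of $A$ gives invertibility with norm at most $1/c$, and then invoke \prettyref{thm:characterization_exp_stability}. The only difference is that you spell out the m-accretive perturbation and surjectivity argument that the paper leaves implicit.
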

\begin{proof}
By assumption there exist $c>0$ and $0<\rho_{0}<\frac{1}{2r}$ such
that 
\[
\Re zM(z^{-1})\geq c\quad\left(z\in\mathbb{C}_{\Re>-\rho_{0}}\setminus B[0,\delta]\right).
\]
Consequently, $zM(z^{-1})+A$ is continuously invertible for each
$z\in\mathbb{C}_{\Re>-\rho_{0}}\setminus B[0,\delta]$ with 
\[
\left\Vert \left(zM(z^{-1})+A\right)^{-1}\right\Vert \leq\frac{1}{c}.
\]
In particular, this implies that the evolutionary problem is well-posed.
Moreover, for $z\in B[0,\delta]\setminus\{0\}$ we have that 
\[
\|zM(z^{-1})\|\leq K<\|A^{-1}\|^{-1}
\]
and hence, we obtain that $zM(z^{-1})+A$ is continuously invertible
for all $z\in B[0,\delta]\setminus\{0\}$ with 
\[
\left\Vert \left(zM(z^{-1})+A\right)^{-1}\right\Vert \leq\frac{\|A^{-1}\|}{1-K\|A^{-1}\|}.
\]
Thus, we have that 
\[
\mathbb{C}_{\Re>-\rho_{0}}\setminus\{0\}\ni z\mapsto\left(zM(z^{-1})+A\right)^{-1}\in L(H)
\]
is a bounded and analytic mapping and hence, the assertion follows
from \prettyref{thm:characterization_exp_stability}. 
\end{proof}

\subsection{Hyperbolic problems}

We now start from a second order hyperbolic equation of the form 
\begin{equation}
\left(\partial_{0}^{2}M(\partial_{0}^{-1})+C^{\ast}C\right)u=f,\label{eq:hyperbolic}
\end{equation}
where $C:D(C)\subseteq H_{0}\to H_{1}$ is a densely defined closed
linear operator between two Hilbert spaces $H_{0}$ and $H_{1}$,
which is boundedly invertible, $M(\partial_{0}^{-1})=M_{0}(\partial_{0}^{-1})+\partial_{0}^{-1}M_{1}(\partial_{0}^{-1})$,
where $M_{0},M_{1}:\mathbb{C}\setminus B[-r,r]\to L(H_{0})$ are analytic
bounded mappings for some $r>0$. We may rewrite this problem as a
first order system in the new unknowns $v\coloneqq\partial_{0}u+du$
and $q\coloneqq Cu,$ where $d>0$ is an arbitrary parameter. We have
that 
\begin{align*}
\partial_{0}M(\partial_{0}^{-1})v & =\partial_{0}^{2}M(\partial_{0}^{-1})u+d\partial_{0}M(\partial_{0}^{-1})u\\
 & =\partial_{0}^{2}M(\partial_{0}^{-1})u+dM_{0}(\partial_{0}^{-1})\partial_{0}u+dM_{1}(\partial_{0}^{-1})u\\
 & =\partial_{0}^{2}M(\partial_{0}^{-1})u+dM_{0}(\partial_{0}^{-1})\left(v-du\right)+dM_{1}(\partial_{0}^{-1})u
\end{align*}
and, consequently, 
\[
\left(\partial_{0}M(\partial_{0}^{-1})-dM_{0}(\partial_{0}^{-1})\right)v+d\left(dM_{0}(\partial_{0}^{-1})-M_{1}(\partial_{0}^{-1})\right)C^{-1}q+C^{\ast}q=f.
\]
Hence, the resulting system reads as 
\begin{equation}
\left(\partial_{0}\left(\begin{array}{cc}
M(\partial_{0}^{-1}) & 0\\
0 & 1
\end{array}\right)+d\left(\begin{array}{cc}
-M_{0}(\partial_{0}^{-1})\; & \left(dM_{0}(\partial_{0}^{-1})-M_{1}(\partial_{0}^{-1})\right)C^{-1}\\
0 & 1
\end{array}\right)+\left(\begin{array}{cc}
0 & C^{\ast}\\
-C & 0
\end{array}\right)\right)\left(\begin{array}{c}
v\\
q
\end{array}\right)=\left(\begin{array}{c}
f\\
0
\end{array}\right).\label{eq:firstoreder}
\end{equation}

We now consider the new material law $M_{d}(\partial_{0}^{-1})$ depending
on the additional parameter $d>0$ induced by the function
\begin{align}
M_{d}(z) & \coloneqq\left(\begin{array}{cc}
M(z) & 0\\
0 & 1
\end{array}\right)+dz\left(\begin{array}{cc}
-M_{0}(z)\; & \left(dM_{0}(z)-M_{1}(z)\right)C^{-1}\\
0 & 1
\end{array}\right),\label{eq:M_d}
\end{align}
for $z\in\mathbb{C}\setminus B[-r,r]$. The aim is to show that under
suitable assumptions on the material law $M(\partial_{0}^{-1})$,
we can show that $M_{d}$ satisfies the conditions \prettyref{eq:bounded}
and \prettyref{eq:pos_def} of \prettyref{prop:exp_decay_invertible_A}. 
\begin{rem}
\label{rem: first_to_second}Note that if we can show that \prettyref{eq:firstoreder}
is exponentially stable, we get that \prettyref{eq:hyperbolic} is
exponentially stable in the sense that $\partial_{0}u,u\in H_{-\nu}(\mathbb{R};H_{0})$
and $Cu\in H_{-\nu}(\mathbb{R};H_{1})$, if $f\in H_{-\nu}(\mathbb{R};H_{0})$.
Indeed, the exponential stability of \prettyref{eq:firstoreder} yields
$v=\partial_{0}u+du\in H_{-\nu}(\mathbb{R};H_{0})$ and $q=Cu\in H_{-\nu}(\mathbb{R};H_{1}).$
By the bounded invertibility of $C$, we read of that $u\in H_{-\nu}(\mathbb{R};H)$
and hence, $\partial_{0}u=v-du\in H_{-\nu}(\mathbb{R};H).$ Moreover,
we note that we also read off a pointwise decay estimate for $u$
by 
\begin{align*}
|u(t)| & =\left|\intop_{t}^{\infty}\partial_{0}u(s)\mbox{ d}s\right|\\
 & \leq|\partial_{0}u|_{H_{-\nu}(\mathbb{R};H)}e^{-\nu t}\quad(t\in\mathbb{R}),
\end{align*}
which yields $|u(t)|^{\mu t}\to0$ as $t\to\infty$ for each $0<\mu<\nu.$ 
\end{rem}
We begin with the following lemma.
\begin{lem}
\label{lem:pos_def_Md}Let $M_{d}$ be given as in \prettyref{eq:M_d}
and let $z\in\mathbb{C}\setminus B[-r,r]$. If there is $c>0$ such
that $\Re z^{-1}M(z)\geq c$ then 
\[
\Re z^{-1}M_{d}(z)\geq\min\left\{ c-dK(d),\frac{3}{4}d+\Re z\right\} ,
\]
where 
\[
K(d)\coloneqq\|M_{0}\|_{\infty}+\left(d\|M_{0}\|_{\infty}+\|M_{1}\|_{\infty}\|C^{-1}\|\right)^{2}.
\]
\end{lem}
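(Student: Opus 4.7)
The plan is to expand $z^{-1}M_{d}(z)$ in its natural $2\times 2$ block form on $H_{0}\oplus H_{1}$, estimate its Hermitian part from below by testing against a generic vector $(x_{0},x_{1})$, and absorb the off-diagonal coupling via a carefully calibrated Young inequality. Concretely, reading off from \prettyref{eq:M_d} one obtains
\[
z^{-1}M_{d}(z)=\begin{pmatrix}z^{-1}M(z)-dM_{0}(z) & d(dM_{0}(z)-M_{1}(z))C^{-1}\\ 0 & z^{-1}+d\end{pmatrix}.
\]
The Hermitian part of the $(1,1)$-block is bounded below by $c-d\|M_{0}\|_{\infty}$, using the hypothesis $\Re z^{-1}M(z)\geq c$ together with $\|\Re M_{0}(z)\|\leq\|M_{0}\|_{\infty}$. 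The $(2,2)$-block is the scalar $(\Re z^{-1}+d)I$ on $H_{1}$. The off-diagonal $B_{12}\coloneqq d(dM_{0}(z)-M_{1}(z))C^{-1}$ satisfies $\|B_{12}\|\leq d(d\|M_{0}\|_{\infty}+\|M_{1}\|_{\infty}\|C^{-1}\|)$.

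For $(x_{0},x_{1})\in H_{0}\oplus H_{1}$ and any $\epsilon>0$, Young's inequality gives
\[
|\Re\langle x_{0},B_{12}x_{1}\rangle|\leq\tfrac{\epsilon}{2}\|B_{12}\||x_{0}|^{2}+\tfrac{1}{2\epsilon}\|B_{12}\||x_{1}|^{2}.
\]
The key calibration is to choose $\epsilon=2\|B_{12}\|/d$, which makes the $|x_{1}|^{2}$-coefficient lose exactly $d/4$, leaving $\tfrac{3}{4}d+\Re z^{-1}$ in that slot, while the $|x_{0}|^{2}$-coefficient loses
\[
\frac{\|B_{12}\|^{2}}{d}\leq d(d\|M_{0}\|_{\infty}+\|M_{1}\|_{\infty}\|C^{-1}\|)^{2},
\]
so that this slot is bounded below by $c-d\|M_{0}\|_{\infty}-d(d\|M_{0}\|_{\infty}+\|M_{1}\|_{\infty}\|C^{-1}\|)^{2}=c-dK(d)$. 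Taking the minimum of the two scalar coefficients yields the asserted operator lower bound.

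No substantive obstacle arises here. The only genuine choice is the calibration of $\epsilon$ in Young's inequality; this is precisely what dictates the form of $K(d)$ and the constant $\tfrac{3}{4}$. Different choices of $\epsilon$ would yield different splittings between the two scalars, and the present one is tailored to leave a fixed positive fraction of $d$ in the $(2,2)$-coefficient, which will be essential later when verifying \prettyref{eq:pos_def} of \prettyref{prop:exp_decay_invertible_A}.
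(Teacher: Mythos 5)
Your proof is correct and follows essentially the same route as the paper: the same block decomposition of $z^{-1}M_{d}(z)$, the same bounds $c-d\|M_{0}\|_{\infty}$, $\Re z^{-1}+d$ and $\|B_{12}\|\leq d(d\|M_{0}\|_{\infty}+\|M_{1}\|_{\infty}\|C^{-1}\|)$ on the blocks, and a Young inequality calibrated to give up exactly $d/4$ in the $(2,2)$-slot (your choice $\epsilon=2\|B_{12}\|/d$ produces the identical split as the paper's $\varepsilon=d/4$). The $\Re z^{-1}$ you obtain in the second entry of the minimum is indeed what the computation yields and what the paper uses in its subsequent applications of the lemma; the $\Re z$ appearing in the stated inequality is a typo, not a defect of your argument.
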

\begin{proof}
By assumption we have $\Re z^{-1}\geq-\frac{1}{2r}$ and we estimate
for $(x,y)\in H_{0}\oplus H_{1}$ 
\begin{align*}
 & \Re\left\langle \left.z^{-1}M_{d}(z)\left(\begin{array}{c}
x\\
y
\end{array}\right)\right|\left(\begin{array}{c}
x\\
y
\end{array}\right)\right\rangle \\
 & \geq c|x|^{2}+d\Re\langle-M_{0}(z)x+\left(dM_{0}(z)-M_{1}(z)\right)C^{-1}y|x\rangle+\left(d+\Re z\right)|y|^{2}\\
 & \geq\left(c-d\|M_{0}\|_{\infty}\right)|x|^{2}-d\left(d\|M_{0}\|_{\infty}+\|M_{1}\|_{\infty}\|C^{-1}\|\right)|x||y|+\left(d+\Re z\right)|y|^{2}\\
 & \geq\left(c-d\|M_{0}\|_{\infty}-\frac{1}{4\varepsilon}d^{2}\left(d\|M_{0}\|_{\infty}+\|M_{1}\|_{\infty}\|C^{-1}\|\right)^{2}\right)|x|^{2}+\left(d+\Re z-\varepsilon\right)|y|^{2},
\end{align*}
for each $\varepsilon>0.$ Choosing $\varepsilon=\frac{d}{4},$ we
obtain the assertion.
\end{proof}
We begin to treat the case, when the function $M$ satisfies \prettyref{eq:pos_def}
for $\delta=0$ (note that condition \prettyref{eq:bounded} is trivially
satisfied for $\delta=0$).
\begin{prop}
\label{prop:damped_wave}Let $M_{d}$ be given as above and assume
that 
\begin{equation}
\exists c>0\forall z\in\mathbb{C}\setminus B[-r,r]:\Re z^{-1}M(z)\geq c.\label{eq:pos_def_M}
\end{equation}
Then there exists $d_{0}>0$ such that the function $M_{d_{0}}$ satisfies
\prettyref{eq:pos_def} for $\delta=0$.\end{prop}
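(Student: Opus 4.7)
The plan is to invoke Lemma \prettyref{lem:pos_def_Md} and then pick $d_{0}$ and $\rho_{0}$ so that both arguments of the minimum provided by the lemma are strictly positive on the spectral region required by condition \prettyref{eq:pos_def}.

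First, the hypothesis $\Re z^{-1}M(z)\geq c$ for every $z\in\mathbb{C}\setminus B[-r,r]$ is exactly the pointwise hypothesis of Lemma \prettyref{lem:pos_def_Md}. Hence, for every $d>0$ and every $z\in\mathbb{C}\setminus B[-r,r]$, that lemma gives the bound $\Re z^{-1}M_{d}(z)\geq\min\{c-dK(d),\,\tfrac{3}{4}d+\Re z^{-1}\}$, where the second term reflects the contribution of the $(2,2)$-block $z^{-1}+d$ of $z^{-1}M_{d}(z)$.

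To match condition \prettyref{eq:pos_def} for $M_{d_{0}}$ with $\delta=0$, I would set $w=z^{-1}$: the quantity $\Re z\,M_{d_{0}}(z^{-1})$ is the same as $\Re w^{-1}M_{d_{0}}(w)$, and the region $\{\Re z>-\rho_{0}\}\setminus\{0\}$ corresponds to the exterior of the ball $B[-\tfrac{1}{2\rho_{0}},\tfrac{1}{2\rho_{0}}]$, which is contained in $\mathbb{C}\setminus B[-r,r]$ as soon as $\rho_{0}<\tfrac{1}{2r}$. On this exterior $\Re w^{-1}>-\rho_{0}$ by the very definition of the ball, so the previous estimate simplifies to $\Re w^{-1}M_{d_{0}}(w)\geq\min\{c-d_{0}K(d_{0}),\,\tfrac{3}{4}d_{0}-\rho_{0}\}$.

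It then suffices to choose $d_{0}$ and $\rho_{0}$ making both arguments of this minimum strictly positive. Since $K$ is continuous at $0$ with $K(0)=\|M_{0}\|_{\infty}+\|M_{1}\|_{\infty}^{2}\|C^{-1}\|^{2}$ finite, one has $dK(d)\to 0$ as $d\to 0^{+}$, so there is $d_{0}\in(0,\tfrac{1}{r})$ with $c-d_{0}K(d_{0})>0$. Setting $\rho_{0}:=d_{0}/2$ gives $\rho_{0}\in(0,\tfrac{1}{2r})$ and $\tfrac{3}{4}d_{0}-\rho_{0}=d_{0}/4>0$, and $c':=\min\{c-d_{0}K(d_{0}),\,d_{0}/4\}>0$ is the required constant. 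The only subtle point is the joint choice of $d_{0}$ and $\rho_{0}$: $d_{0}$ must be small enough for the quadratic correction $dK(d)$ to stay below $c$, while $\rho_{0}$ must be smaller than both $\tfrac{3}{4}d_{0}$ and $\tfrac{1}{2r}$. The ansatz $\rho_{0}=d_{0}/2$ with $d_{0}<1/r$ accommodates both constraints simultaneously, so once this balance is noticed there is no further obstacle.
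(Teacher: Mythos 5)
Your proof is correct and follows the paper's argument essentially verbatim: apply Lemma \prettyref{lem:pos_def_Md} on the exterior of $B\left[-\tfrac{1}{2\rho_{0}},\tfrac{1}{2\rho_{0}}\right]$, where $\Re z^{-1}\geq-\rho_{0}$, then pick $d_{0}$ with $c-d_{0}K(d_{0})>0$ and afterwards $\rho_{0}<\tfrac{3}{4}d_{0}$ (the paper leaves the exact choice of $\rho_{0}$ open rather than fixing $\rho_{0}=d_{0}/2$, but that is immaterial). You also correctly read the second argument of the minimum in the lemma as $\tfrac{3}{4}d+\Re z^{-1}$, coming from the $(2,2)$-entry $z^{-1}+d$, which is how the paper itself uses the lemma.
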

\begin{proof}
Let $z\in\mathbb{C}\setminus B\left[-\frac{1}{2\rho_{0}},\frac{1}{2\rho_{0}}\right]$
where $\rho_{0}\in]0,\frac{1}{2r}[$ will be chosen later. Consequently
$\Re z^{-1}\geq-\rho_{0}$ and we obtain due to \prettyref{lem:pos_def_Md}
\[
\Re z^{-1}M_{d}(z)\geq\min\left\{ c-dK(d),\frac{3}{4}d-\rho_{0}\right\} .
\]
Choosing now $d_{0}$ small enough such that $c-d_{0}K(d_{0})>0$
and then choosing $\rho_{0}<\frac{3}{4}d_{0}$ we derive the positive
definiteness constraint \prettyref{eq:pos_def} for $M_{d_{0}}$ with
$\delta=0$.\end{proof}
\begin{example}
The latter proposition applies to material laws of the form $M(z)=M_{0}+zM_{1},$
where $M_{0}\in L(H)$ is a selfadjoint, non-negative operator, $M_{1}\in L(H)$
is strictly positive definite and $z\in\mathbb{C}$. Indeed, $M_{0}(z)=M_{0}$
and $M_{1}(z)=M_{1}$ are trivially analytic and bounded mappings
on $\mathbb{C}$ and we have that 
\[
\Re z^{-1}M(z)=\Re z^{-1}M_{0}+\Re M_{1}\geq\Re z^{-1}M_{0}+c
\]
for each $z\in\mathbb{C}\setminus\{0\}$, where $c>0$ is such that
$\Re M_{1}\geq c$. Hence, choosing $r<\frac{\|M_{0}\|}{2c}$ we obtain
that \prettyref{eq:pos_def_M} holds. The corresponding second order-problem
is 
\begin{equation}
\left(\partial_{0}^{2}M_{0}+\partial_{0}M_{1}+C^{\ast}C\right)u=f,\label{eq:wave_heat}
\end{equation}
which therefore is exponentially stable due to \prettyref{prop:exp_decay_invertible_A}.
Note that the above equation is not simply an abstract damped wave
equation, since $M_{0}$ is allowed to have a non-trivial kernel.
Indeed, let $H=L_{2}(\Omega)$ for some bounded domain $\Omega\subseteq\mathbb{R}^{n}$,
$-C^{\ast}C=\Delta_{D}$ the Dirichlet-Laplacian, $M_{0}=\chi_{\Omega_{1}}(\mathrm{m})$
for some $\Omega_{1}\subseteq\Omega$ and $M_{1}=c>0$. Then \prettyref{eq:wave_heat}
is a combination of the heat equation on $\Omega\setminus\Omega_{1}$
(as $M_{0}=0$ on this set) and the damped wave equation on $\Omega_{1}$
(as $M_{0}=1$ on $\Omega_{1}$). Such classes of degenerated differentail
equations have been studied for instance in \cite{Colli1995,Colli1996},
see also the monograph \cite{Favini1999}.
\end{example}
Of course, we want to apply \prettyref{prop:exp_decay_invertible_A}
to a broader class of hyperbolic differential equations such as integro-differential
equations or delay equations. It turns out, that in this case the
material law $M(\partial_{0}^{-1})$ fails to satisfy the condition
\prettyref{eq:pos_def_M}. One way to deal with such equations provides
our next result.
\begin{prop}
\label{prop:exp_decay_integro}Let $M_{d}$ be given as in \prettyref{eq:M_d}
and assume that 
\begin{equation}
\forall\delta>0\;\exists\rho_{0}\in\left]0,\frac{1}{2r}\right[,c>0\;\forall z\in\mathbb{C}_{\Re>-\rho_{0}}\setminus B[0,\delta]:\:\Re zM(z^{-1})\geq c.\label{eq:pos_def_M_locally}
\end{equation}
Moreover, we assume that $\lim_{z\to0}M_{1}(z^{-1})=0$. Then there
exists $d_{0}>0$ such that the material law $M_{d_{0}}$ satisfies
\prettyref{eq:bounded} and \prettyref{eq:pos_def} for a suitable
$\delta>0$, where $A\coloneqq\left(\begin{array}{cc}
0 & C^{\ast}\\
-C & 0
\end{array}\right).$ \end{prop}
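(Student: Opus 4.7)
The plan is to choose $d_0, \delta, \rho_0 > 0$ in an appropriate order and verify conditions \prettyref{eq:bounded} and \prettyref{eq:pos_def} for $M_{d_0}$ relative to $A = \left(\begin{smallmatrix} 0 & C^{*} \\ -C & 0 \end{smallmatrix}\right)$, for which $\|A^{-1}\| = \|C^{-1}\|$. First I would expand \prettyref{eq:M_d} to obtain
\[
 zM_d(z^{-1}) = \begin{pmatrix} zM(z^{-1}) - dM_0(z^{-1}) & d\bigl(dM_0(z^{-1})-M_1(z^{-1})\bigr)C^{-1} \\ 0 & z+d \end{pmatrix},
\]
and note that since $M = M_0 + z M_1$, the quantity $zM(z^{-1}) = zM_0(z^{-1}) + M_1(z^{-1})$ tends to $0$ as $z \to 0$ by the boundedness of $M_0$ and the new hypothesis $\lim_{z\to 0}M_1(z^{-1}) = 0$. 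A blockwise norm estimate therefore yields
\[
 \|zM_d(z^{-1})\| \leq \bigl[\|zM(z^{-1})\| + d\|C^{-1}\|\,\|M_1(z^{-1})\| + |z|\bigr] + d\bigl(\|M_0\|_\infty(1 + d\|C^{-1}\|) + 1\bigr),
\]
in which the first bracket vanishes as $z \to 0$ and the second is of order $d$.

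I then select parameters as follows: (i) pick $d_0^{(1)} \in (0, 1]$ with $d_0^{(1)}\bigl(\|M_0\|_\infty(1 + d_0^{(1)}\|C^{-1}\|) + 1\bigr) < \tfrac{1}{2}\|C^{-1}\|^{-1}$; (ii) pick $\delta \in \;]0, 1/(2r)[\;$ so small that the first bracket above, taken with $d = d_0^{(1)}$, stays below $\tfrac{1}{2}\|C^{-1}\|^{-1}$ on $B[0,\delta]\setminus\{0\}$; (iii) apply \prettyref{eq:pos_def_M_locally} to this $\delta$ to produce $c(\delta) > 0$ and $\rho_0(\delta) \in \;]0, 1/(2r)[\;$ with $\Re zM(z^{-1}) \geq c(\delta)$ on $\mathbb{C}_{\Re>-\rho_0(\delta)} \setminus B[0,\delta]$; (iv) shrink $d_0 \leq d_0^{(1)}$ so that $d_0 K(d_0) < c(\delta)$, which is possible since $dK(d) \to 0$ as $d \to 0$ (here $K$ denotes the quantity from \prettyref{lem:pos_def_Md}); (v) set $\rho_0 := \min\{\rho_0(\delta),\, d_0\delta^2/2\}$.

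Condition \prettyref{eq:bounded} then follows from (i)--(ii), together with the observation that the second bracket in the norm estimate is increasing in $d$, so the reduction $d_0 \leq d_0^{(1)}$ in step (iv) does not spoil it. For \prettyref{eq:pos_def}, I fix $z \in \mathbb{C}_{\Re>-\rho_0}\setminus B[0,\delta]$; since $\rho_0 < 1/(2r)$, we have $z^{-1} \in \mathbb{C}\setminus B[-r,r]$, and (iii) gives $\Re zM(z^{-1}) \geq c(\delta)$. Applying \prettyref{lem:pos_def_Md} at $z^{-1}$ in place of its argument then yields
\[
 \Re zM_{d_0}(z^{-1}) \geq \min\bigl\{c(\delta) - d_0K(d_0),\; \tfrac{3d_0}{4} + \Re z^{-1}\bigr\}.
\]
The first entry is positive by (iv); for the second, $\Re z^{-1} = \Re z/|z|^2 \geq -\rho_0/\delta^2 \geq -d_0/2$ by (v), hence $\tfrac{3d_0}{4} + \Re z^{-1} \geq d_0/4 > 0$.

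The main obstacle is the tangled smallness constraints: \prettyref{eq:bounded} asks that $d_0$ and $\delta$ be small, the first entry of the minimum above demands $d_0K(d_0) < c(\delta)$ where $c(\delta)$ is only accessible after $\delta$ is fixed, and the second entry demands $\rho_0 < \tfrac{3}{4}d_0\delta^2$. The decisive new ingredient is $\lim_{z\to 0}M_1(z^{-1}) = 0$: it ensures the ``$z = 0$'' limit of the upper bound on $\|zM_{d_0}(z^{-1})\|$ is of order $d_0$ and free of any unknown boundary value of $M_1$, so that \prettyref{eq:bounded} can be secured by choosing $d_0$ small \emph{before} the positivity hypothesis is invoked, which is what breaks the circularity.
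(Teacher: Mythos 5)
Your proof is correct and follows essentially the same route as the paper: first secure \prettyref{eq:bounded} for all sufficiently small $d$ by exploiting the boundedness of $M_{0}$ and the hypothesis $\lim_{z\to0}M_{1}(z^{-1})=0$, then fix $\delta$, invoke \prettyref{eq:pos_def_M_locally} to get $c(\delta)$, apply \prettyref{lem:pos_def_Md}, and shrink $d_{0}$ so that $d_{0}K(d_{0})<c(\delta)$. The only cosmetic difference lies in the second entry of the minimum from \prettyref{lem:pos_def_Md}: the paper reads it (consistently with that lemma's proof) as $\tfrac{3}{4}d+\Re z$ after the substitution $z\mapsto z^{-1}$, so that $\rho_{1}<\tfrac{3}{4}d_{0}$ suffices, whereas you take the lemma's statement literally, obtain $\tfrac{3}{4}d_{0}+\Re z^{-1}$, and compensate with the extra constraint $\rho_{0}\leq d_{0}\delta^{2}/2$ via $|z|\geq\delta$ --- both readings close the argument.
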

\begin{proof}
As $M_{0}$ and $M_{1}$ are assumed to be bounded, we obtain that
\[
G(d)\coloneqq\sup_{z\in\mathbb{C}\setminus B[-r,r]}\left\Vert d\left(\begin{array}{cc}
-M_{0}(z)\; & \left(dM_{0}(z)-M_{1}(z)\right)C^{-1}\\
0 & 1
\end{array}\right)\right\Vert \to0\quad(d\to0).
\]
Hence, recalling that $M(z^{-1})=M_{0}(z^{-1})+zM_{1}(z^{-1})$, we
estimate for $z\in\mathbb{C}_{\Re>-\frac{1}{2r}}\setminus\{0\}$ 
\begin{align*}
\left\Vert zM_{d}(z^{-1})\right\Vert  & \leq\max\{\|zM(z^{-1})\|,|z|\}+G(d)\\
 & =\max\left\{ |z|\|M_{0}\|_{\infty}+\|M_{1}(z^{-1})\|,|z|\right\} +G(d).
\end{align*}
Thus, choosing $\delta>0$ and $d_{1}>0$ small enough we obtain that
\[
\sup_{z\in B[0,\delta]\setminus\{0\}}\|zM_{d}(z^{-1})\|\leq\max\left\{ \delta\|M_{0}\|_{\infty}+\sup_{z\in B[0,\delta]\setminus\{0\}}\|M_{1}(z^{-1})\|,\delta\right\} +G(d_{1})<\|A^{-1}\|^{-1}
\]
for each $0<d<d_{1}.$ This shows \prettyref{eq:bounded} for $M_{d}$
where $0<d<d_{1}$. According to \prettyref{eq:pos_def_M_locally}
there is $c>0$ and $\rho_{0}\in]0,\frac{1}{2r}[$ such that 
\[
\Re zM(z^{-1})\geq c
\]
for all $z\in\mathbb{C}_{\Re>-\rho_{0}}\setminus B[0,\delta].$ Thus,
by \prettyref{lem:pos_def_Md} we have that 
\[
\Re zM_{d}(z^{-1})\geq\min\left\{ c-dK(d),\frac{3}{4}d-\rho_{1}\right\} 
\]
for every $z\in\mathbb{C}_{\Re>-\rho_{1}}\setminus B[0,\delta]$,
where $0<\rho_{1}\leq\rho_{0}$. Hence, choosing first $0<d_{0}\leq d_{1}$
small enough such that $d_{0}K(d_{0})<c$ and then $\rho_{1}<\frac{3}{4}d_{0}$
we derive \prettyref{eq:pos_def} for $M_{d_{0}}.$
\end{proof}

\subsection{Integro-differential equations\label{sub:integro}}

A class of material laws $M(\partial_{0}^{-1})$ satisfying the assumptions
of \prettyref{prop:exp_decay_integro} arises in the study of hyperbolic
integro-differential equations of the form 
\[
\left(\partial_{0}^{2}\left(1-k\ast\right)^{-1}+C^{\ast}C\right)u=f.
\]
Following \cite{Trostorff2012_integro}, we consider kernels $k:\mathbb{R}_{\geq0}\to L(H_{0})$
with the following properties

\begin{hyp} $\,$

\begin{enumerate}[(a)]

\item $k$ is weakly measurable, i.e. for each $x,y\in H_{0}$, the
function $\mathbb{R}_{\geq0}\ni t\mapsto\langle k(t)x|y\rangle$ is
measurable,

\item $\mathbb{R}_{\geq0}\ni t\mapsto\|k(t)\|$ is measurable,%
\footnote{Note that this follows from (a) if $H_{0}$ is separable.%
}

\item there exists $\alpha>0$ such that 
\[
|k|_{L_{1,-\alpha}}\coloneqq\intop_{0}^{\infty}e^{\alpha t}\|k(t)\|\mbox{ d}t<1,
\]

\item $k(t)$ is selfadjoint for almost every $t\in\mathbb{R}_{\geq0},$

\item $k(t)k(s)=k(s)k(t)$ for almost every $t,s\in\mathbb{R}_{\geq0}.$

\end{enumerate}

\end{hyp}

We consider the material law $M(z)=(1-\sqrt{2\pi}\:\hat{k}(-\i z^{-1}))^{-1}$
for $z\in\mathbb{C}\setminus B[-r,r],$ where $r=\frac{1}{2\alpha}$
and 
\[
\hat{k}(z)=\frac{1}{\sqrt{2\pi}}\intop_{0}^{\infty}e^{-\i zt}k(t)\mbox{ d}t,\quad(z\in\mathbb{C}_{\Im<\alpha})
\]
where the integral is meant in the weak sense. The operator $M(\partial_{0}^{-1})$
is then $(1-k\ast)^{-1}$ (see \cite{Trostorff2012_integro} for more
details). This material law is of the form $M(z)=M_{0}(z)+zM_{1}(z)$
with $M_{1}(z)=0$, since $\|M(z)\|\leq\frac{1}{1-|k|_{L_{1,-\alpha}}}$
for all $z\in\mathbb{C}\setminus B[-r,r].$ In order to obtain \prettyref{eq:pos_def_M_locally}
for this material law $M$ one needs to assume a similar estimate
for the imaginary part of the Fourier transform $\hat{k}$, which
reads as follows: 

\begin{hyp}$\,$

(f) For each $\delta>0$ there is a function $g:\mathbb{R}_{>-\alpha}\to\mathbb{R}_{\geq0}$
continuous at $0$ with $g(0)>0$ such that for each $|t|>\delta,\rho>-\alpha$
we have the estimate%
\footnote{Recall, that for a bounded operator $T\in L(H)$ its imaginary part
$\Im T$ is defined as the selfadjoint operator $\frac{1}{2\i}(T-T^{\ast})$.
Thus, Hypothesis (f) means, using assumption (a), that 
\[
\left\langle \left.\frac{1}{2\i}t\left(\hat{k}(t-\i\rho)-\hat{k}(-t-\i\rho)\right)x\right|x\right\rangle \leq-g(\rho)|x|^{2}.\quad(x\in H_{0},|t|>\delta,\rho>-\alpha)
\]
} 
\[
t\Im\hat{k}(t-\i\rho)\leq-g(\rho).
\]

\end{hyp}

We can now prove that under the Hypotheses (a)-(f), that the material
law $M(z)=(1-\sqrt{2\pi}\:\hat{k}(-\i z^{-1}))^{-1}$ satisfies \prettyref{eq:pos_def_M_locally}.
For doing so, let $\delta>0$ and $z\in\mathbb{C}_{\Re>-\rho_{0}}\setminus[-\delta,\delta]^{2},$
$z=\i t+\rho$ for $|t|>\delta$ and $\rho\geq-\rho_{0}$, where $\rho_{0}\in]0,\alpha[$
will be chosen later. We set 
\[
D\coloneqq\left|1-\sqrt{2\pi}\:\hat{k}(t-\i\rho)\right|^{-1}
\]
and note that due to assumption (e) the operator $\hat{k}(t-\i\rho)$
is normal and thus, $D$ and $1-\sqrt{2\pi}\:\hat{k}(-t-\i\rho)=\left(1-\sqrt{2\pi}\:\hat{k}(t-\i\rho\right)^{\ast}$
commute. We estimate for $x\in H_{0}:$ 
\begin{align}
\Re\langle zM(z^{-1})x|x\rangle & =\Re(\i t+\rho)\langle\left(1-\sqrt{2\pi}\:\hat{k}(t-\i\rho)\right)^{-1}x|x\rangle\nonumber \\
 & \geq\rho\Re\langle\left(1-\sqrt{2\pi}\:\hat{k}(t-\i\rho)\right)^{-1}x|x\rangle-t\Im\langle\left(1-\sqrt{2\pi}\:\hat{k}(t-\i\rho)\right)^{-1}x|x\rangle.\nonumber \\
 & =\rho\Re\langle\left(1-\sqrt{2\pi}\:\hat{k}(-t-\i\rho)\right)Dx|Dx\rangle-t\Im\langle\left(1-\sqrt{2\pi}\:\hat{k}(-t-\i\rho)\right)Dx|Dx\rangle\nonumber \\
 & =\rho\Re\langle\left(1-\sqrt{2\pi}\:\hat{k}(-t-\i\rho)\right)Dx|Dx\rangle-\sqrt{2\pi}\langle t\Im\hat{k}(t-\i\rho)Dx|Dx\rangle\nonumber \\
 & \geq\rho\Re\langle\left(1-\sqrt{2\pi}\:\hat{k}(-t-\i\rho)\right)Dx|Dx\rangle+\sqrt{2\pi}g(\rho)|Dx|^{2}.\label{eq:estimate integro}
\end{align}
If $\rho\in[-\rho_{0},\rho_{0}]$ for some $0<\rho_{0}<\alpha$, the
latter term can be estimated by 
\[
\frac{-\rho_{0}\left(1+|k|_{1,-\alpha}\right)+\sqrt{2\pi}\inf_{\rho\in[-\rho_{0},\rho_{0}]}g(\rho)}{(1+|k|_{1,-\alpha})^{2}}|x|^{2},
\]
where we have used 
\[
|x|=|D^{-1}Dx|\leq(1+|k|_{1,-\alpha})|Dx|.
\]
Since $\frac{-\rho_{0}\left(1+|k|_{1,-\alpha}\right)+\sqrt{2\pi}\inf_{\rho\in[-\rho_{0},\rho_{0}]}g(\rho)}{(1+|k|_{1,-\alpha})^{2}}\to\frac{\sqrt{2\pi}g(0)}{(1+|k|_{1,-\alpha})^{2}}>0$
as $\rho_{0}\to0,$ we find $\rho_{0}\in]0,\alpha[$ such that 
\[
\frac{-\rho_{0}\left(1+|k|_{1,-\alpha}\right)+\sqrt{2\pi}\inf_{\rho\in[-\rho_{0},\rho_{0}]}g(\rho)}{(1+|k|_{1,-\alpha})^{2}}>0.
\]
If $\rho>\rho_{0}$ we have that \prettyref{eq:estimate integro}
can be estimated by 
\[
\frac{\rho\left(1-|k|_{1,-\alpha}\right)+\sqrt{2\pi}g(\rho)}{(1+|k|_{1,-\alpha})^{2}}|x|^{2}\geq\frac{\rho_{0}\left(1-|k|_{1,-\alpha}\right)}{(1+|k|_{1,-\alpha})^{2}}|x|^{2}.
\]
Summarizing, we have shown that \prettyref{eq:pos_def_M_locally}
holds for our material law $M$ and hence, the corresponding evolutionary
equation 
\[
\left(\partial_{0}^{2}\left(1-k\ast\right)^{-1}+C^{\ast}C\right)u=f
\]
is exponentially stable by \prettyref{prop:exp_decay_integro}.
\begin{rem}
\label{rem:Cannarsa}In \cite{Cannarsa2011} the authors consider
scalar-valued kernels $k\in L_{1,-\alpha}(\mathbb{R}_{\geq0};\mathbb{R})$
for some $\alpha>0$, such that $t\mapsto\intop_{t}^{\infty}e^{\alpha s}k(s)\mbox{ d}s$
defines a strongly positive definite kernel on $L_{2,\loc}(\mathbb{R}_{\geq0})$
and $\intop_{0}^{\infty}e^{\alpha s}|k(s)|\mbox{ d}s<1$. Clearly,
these kernels satisfy the hypotheses (a)-(e). Moreover, following
\cite[Proposition 2.2, Proposition 2.5]{Cannarsa2011} these kernels
satisfy an estimate of the form 
\[
\exists c>0\:\forall t>0,\rho>-\alpha:\intop_{0}^{\infty}\sin(ts)e^{-\rho s}k(s)\mbox{ d}t\geq c\frac{1}{(\alpha+\rho+1)^{2}}\frac{t}{1+t^{2}}.
\]
Hence, 
\begin{align*}
t\Im\hat{k}(t-\i\rho) & =t\frac{1}{\sqrt{2\pi}}\intop_{0}^{\infty}\sin(-ts)e^{-\rho s}k(s)\mbox{ d}s\\
 & =-t\frac{1}{\sqrt{2\pi}}\intop_{0}^{\infty}\sin(ts)e^{-\rho s}k(s)\mbox{ d}t\\
 & \leq-\frac{c}{\sqrt{2\pi}(\alpha+\rho+1)^{2}}\frac{t^{2}}{1+t^{2}}
\end{align*}
for each $t\geq0,\rho>-\alpha.$ Since $\Im\hat{k}(-t-\i\rho)=-\Im\hat{k}(t-\i\rho),$
we obtain 
\[
t\Im\hat{k}(t-\i\rho)\leq-\frac{c}{\sqrt{2\pi}(\alpha+\rho+1)^{2}}\frac{t^{2}}{1+t^{2}}
\]
for every $t\in\mathbb{R},\rho>-\alpha.$ Let $\delta>0$ and $|t|>\delta.$
Then 
\[
t\Im\hat{k}(t-\i\rho)\leq-\frac{c}{\sqrt{2\pi}(\alpha+\rho+1)^{2}}\frac{\delta^{2}}{1+\delta^{2}}
\]
for $\rho>-\alpha.$ As $g=\left(\rho\mapsto\frac{c}{\sqrt{2\pi}(\alpha+\rho+1)^{2}}\frac{\delta^{2}}{1+\delta^{2}}\right)$
is continuous and attains positive values only, we have that $k$
satisfies (f) and thus, the exponential stability of the corresponding
evolutionary equation is covered by our theory.
\end{rem}

\section{The wave equation with a time delay and a convolution integral}

Motivated by a recent paper of Alabau-Boussouira et al. \cite{Alabau2014},
we study the following wave equation 
\begin{equation}
\partial_{0}^{2}u-(1-k\ast)\Delta u+\kappa\tau_{-h}\partial_{0}u=f\label{eq:wave}
\end{equation}
on a domain $\Omega\subseteq\mathbb{R}^{n}$ with homogeneous Dirichlet
boundary conditions, i.e. 
\begin{equation}
u=0\mbox{ on }\partial\Omega.\label{eq:bd_cond}
\end{equation}
Here $\tau_{-h}$ denotes the translation operator for some $h>0$,
i.e. $\left(\tau_{-h}f\right)(t)=f(t-h)$ and $\kappa\in\mathbb{R}$
is a given parameter. In \cite{Alabau2014}, the kernel $k:\mathbb{R}_{\geq0}\to\mathbb{R}_{\geq0}$
is assumed to be locally absolutely continuous, $k(0)>0,$ $\intop_{0}^{\infty}k(s)\mbox{ d}s<1$
and $k'(t)\leq-\alpha k(t)$ for some $\alpha>0$ and each $t>0$.
We will generalize this to operator-valued kernels satisfying the
assumptions given in \prettyref{sub:integro}. Moreover, we will show
that \prettyref{eq:wave} fits into our general setting and that the
exponential stability can be shown under the assumption that $|\kappa|$
is sufficiently small. This is exactly the result stated in \cite{Alabau2014},
however under weaker assumptions on the kernel $k$ and by a completely
different approach. First of all, we show that \prettyref{eq:wave}
is indeed of the form \prettyref{eq:hyperbolic}. For doing so, we
need to introduce the spatial differential operators involved.
\begin{defn*}
Let $\Omega\subseteq\mathbb{R}^{n}$. We define the gradient with
vanishing boundary values $\grad_{c}$ as the closure of 
\begin{align*}
\grad_{c}|_{C_{c}^{\infty}(\Omega)}:C_{c}^{\infty}(\Omega)\subseteq L_{2}(\Omega) & \to L_{2}(\Omega)^{n}\\
\phi & \mapsto\left(\partial_{i}\phi\right)_{i\in\{1,\ldots,n\}}.
\end{align*}
Moreover, we define $\dive\coloneqq-\grad_{c}^{\ast}$, the divergence
with maximal domain in $L_{2}(\Omega)^{n}.$\end{defn*}
\begin{rem}
We note that by definition, the domain of $\grad_{c}$ is nothing
but the well-known Sobolev-space $H_{0}^{1}(\Omega)$ -- the closure
of the test function $C_{c}^{\infty}(\Omega)$ with respect to the
topology on $H^{1}(\Omega)$. In consequence, the domain of $\dive$
is 
\[
D(\dive)=\left\{ \Phi=(\Phi_{i})_{i\in\{1,\ldots,n\}}\in L_{2}(\Omega)^{n}\,|\,\dive\Phi=\sum_{i=1}^{n}\partial_{i}\Phi_{i}\in L_{2}(\Omega)\right\} ,
\]
where $\partial_{i}\Phi_{i}$ is meant in the distributional sense. 
\end{rem}
Using these operators, \prettyref{eq:wave} together with the boundary
condition \prettyref{eq:bd_cond} reads as 
\begin{equation}
\partial_{0}^{2}u-(1-k\ast)\dive\grad_{c}u+\kappa\tau_{-h}\partial_{0}u=f.\label{eq:wave_new}
\end{equation}

From now on we assume that the kernel $k$ is operator-valued, i.e.
$k:\mathbb{R}_{\geq0}\to L(L_{2}(\Omega))$, and satisfies the hypotheses
(a)-(f) of \prettyref{sub:integro}. The next lemma shows that this
is indeed a generalization of the assumptions on $k$ made in \cite{Alabau2014}.
\begin{lem}
Let $k:\mathbb{R}_{\geq0}\to\mathbb{R}_{\geq0}$ be locally absolutely
continuous, $\intop_{0}^{\infty}k(s)\,\mathrm{d}s<1,$ $k(0)>0$ and
$k'(t)\leq-\alpha_{0}k(t)$ for some $\alpha_{0}>0$ and each $t\in\mathbb{R}_{\geq0}$.
Then $k$ satisfies (a)-(f) in \prettyref{sub:integro}. \end{lem}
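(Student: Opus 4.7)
The plan is to verify the six hypotheses (a)--(f) from Subsection \ref{sub:integro} in turn. Hypotheses (a), (b), (d), (e) are immediate since $k$ is a real-valued scalar function that is locally absolutely continuous (hence measurable with measurable absolute value) and whose pointwise values are real numbers, which are trivially selfadjoint and commuting. For (c), I would note that $k'\leq-\alpha_{0}k$ together with $k\geq 0$ yields $(e^{\alpha_{0}t}k(t))'\leq 0$, so that $k(t)\leq k(0)e^{-\alpha_{0}t}$. For any $\alpha\in(0,\alpha_{0}/2]$ the integrand $e^{\alpha t}k(t)$ is then dominated by the integrable function $k(0)e^{-\alpha_{0}t/2}$, and dominated convergence gives $\int_{0}^{\infty}e^{\alpha t}k(t)\,\mathrm{d}t\to\int_{0}^{\infty}k(t)\,\mathrm{d}t<1$ as $\alpha\to 0^{+}$, providing some $\alpha>0$ realising (c).

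The real content is (f). The plan is to convert the pointwise decay assumption $k'+\alpha_{0}k\leq 0$ into a lower bound for the Fourier-sine integral $-\sqrt{2\pi}\,\Im\hat{k}(t-\i\rho)=\int_{0}^{\infty}\sin(ts)e^{-\rho s}k(s)\,\mathrm{d}s$ via a single integration by parts. Fix $t>0$, $\rho>-\alpha_{0}$ and set $\beta\coloneqq\rho+\alpha_{0}>0$, $F(s)\coloneqq e^{-\rho s}k(s)$, $G(s)\coloneqq(1-\cos(ts))/t$. Then $F\geq 0$, $F'(s)\leq-\beta F(s)$, $F(\infty)=0$, while $G\geq 0$, $G(0)=0$, $G'=\sin(ts)$, so integration by parts gives
\begin{equation*}
\int_{0}^{\infty}\sin(ts)F(s)\,\mathrm{d}s=\int_{0}^{\infty}G(s)(-F'(s))\,\mathrm{d}s\geq\beta\int_{0}^{\infty}G(s)F(s)\,\mathrm{d}s=\frac{\beta}{t}R(t,\rho),
\end{equation*}
where $R(t,\rho)\coloneqq\int_{0}^{\infty}(1-\cos(ts))e^{-\rho s}k(s)\,\mathrm{d}s$. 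Multiplying by $t$ and using that $t\mapsto t\,\Im\hat{k}(t-\i\rho)$ is even in $t$ (since $\sin$ is odd), one obtains for all $|t|\geq\delta$ and $\rho>-\alpha$ (with $\alpha<\alpha_{0}$ as above)
\begin{equation*}
t\,\Im\hat{k}(t-\i\rho)\leq-\frac{\beta}{\sqrt{2\pi}}R(|t|,\rho)\leq-\frac{\beta}{\sqrt{2\pi}}\inf_{s\geq\delta}R(s,\rho).
\end{equation*}

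Finally I would define $g(\rho)\coloneqq\frac{\rho+\alpha_{0}}{\sqrt{2\pi}}\inf_{s\geq\delta}R(s,\rho)$, which is non-negative on $\mathbb{R}_{>-\alpha}$, and verify positivity and continuity at $0$. Positivity at $0$ follows from: $R(\cdot,0)$ is continuous, strictly positive for $s\neq 0$ (because $k\geq 0$, $k(0)>0$, and $1-\cos(s\sigma)>0$ on a set of positive measure near $\sigma=0$), and tends to $\int_{0}^{\infty}k\,\mathrm{d}\sigma>0$ as $s\to\infty$, so $\inf_{s\geq\delta}R(s,0)>0$. For continuity at $0$, the estimate $|e^{-\rho\sigma}-1|\leq|\rho|\sigma e^{|\rho|\sigma}$ together with $k(\sigma)\leq k(0)e^{-\alpha_{0}\sigma}$ yields, for $|\rho|\leq\alpha_{0}/2$, the $s$-uniform bound $|R(s,\rho)-R(s,0)|\leq 8k(0)|\rho|/\alpha_{0}^{2}$, so that $\inf_{s\geq\delta}R(s,\rho)\to\inf_{s\geq\delta}R(s,0)$ as $\rho\to 0$. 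The main obstacle is precisely this uniformity: pointwise positivity of $R(t,\rho)$ for $t\neq 0$ is not enough to produce a usable $g$, and the exponential pointwise bound $k\leq k(0)e^{-\alpha_{0}\sigma}$ (extracted from the ODE-type decay assumption) is what allows the dominated-convergence argument to pass through the infimum over $s$.
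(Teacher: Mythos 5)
Your proof is correct, and for the substantive hypothesis (f) it takes a genuinely different route from the paper, even though the central computational trick is the same: both arguments integrate by parts to convert the sine transform $\intop_{0}^{\infty}\sin(ts)e^{-\rho s}k(s)\,\mathrm{d}s$ into a manifestly nonnegative integral against $1-\cos(ts)$, exploiting $-k'-\alpha k\geq(\alpha_{0}-\alpha)k$. The difference lies in how the $\rho$-dependence is handled. The paper performs this only on the single line $\rho=-\alpha$, normalises by the weight $\tfrac{t}{1+t^{2}}$ to define $\Phi(t)=-\tfrac{1+t^{2}}{t}\Im\hat{k}(t+\i\alpha)$, proves $\Phi\geq c>0$ by checking the limits at $0$ (l'Hospital) and at $\infty$ (Riemann--Lebesgue), and then invokes \cite[Proposition 2.5]{Cannarsa2011} to propagate the resulting estimate into the whole half-plane $\rho>-\alpha$, yielding an explicit $g(\rho)=\tfrac{c}{\sqrt{2\pi}(1+\alpha+\rho)^{2}}\tfrac{\delta^{2}}{1+\delta^{2}}$. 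You instead run the integration by parts directly for every $\rho>-\alpha_{0}$ (using $F'\leq-(\rho+\alpha_{0})F$ for $F(s)=e^{-\rho s}k(s)$) and control the $\rho$-dependence by hand, via the uniform-in-$s$ estimate $|R(s,\rho)-R(s,0)|\lesssim|\rho|$ obtained from the pointwise bound $k(\sigma)\leq k(0)e^{-\alpha_{0}\sigma}$; this makes your argument self-contained (no appeal to the Cannarsa--Sforza proposition) at the price of verifying the continuity of $\rho\mapsto\inf_{s\geq\delta}R(s,\rho)$ at $0$, which you do correctly. Your lower bound $\inf_{s\geq\delta}R(s,0)>0$ uses the same two ingredients (positivity plus continuity on compact $t$-intervals, Riemann--Lebesgue at infinity) as the paper's analysis of $\Phi$. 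The only point you gloss over is the integrability of $F'=e^{-\rho s}(k'-\rho k)$ on $\mathbb{R}_{\geq0}$ needed to justify the integration by parts, but this follows exactly as in the paper's verification that $k'\in L_{1,-\alpha}$, so it is a cosmetic omission rather than a gap.
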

\begin{proof}
The Hypotheses (a),(b),(d) and (e) are trivially satisfied. Moreover,
we observe that $k'(t)\leq-\alpha_{0}k(t)$ yields that $k(t)\leq k(0)e^{-\alpha_{0}t}$
for each $t\in\mathbb{R}_{\geq0}$ and thus, 
\[
\intop_{0}^{\infty}k(t)e^{\alpha t}\mbox{ d}t<\infty\quad(\alpha<\alpha_{0}).
\]
Since 
\[
\mathbb{R}_{<\alpha_{0}}\ni\alpha\mapsto\intop_{0}^{\infty}k(t)e^{\alpha t}\mbox{ d}t
\]
is continuous, it follows form $\intop_{0}^{\infty}k(s)\mbox{ d}s<1$
that (c) holds. Moreover, we have that 
\[
\intop_{0}^{N}e^{\alpha t}|k'(t)|\mbox{ d}t=-\intop_{0}^{N}e^{\alpha t}k'(t)\mbox{ d}t=-e^{\alpha N}k(N)+k(0)+\alpha\intop_{0}^{N}e^{\alpha t}k(t)\mbox{ d}t\quad(\alpha<\alpha_{0},\, N\in\mathbb{N}),
\]
which gives $k'\in L_{1,-\alpha}(\mathbb{R}_{\geq0})$, as the latter
term converges to $k(0)+\alpha|k|_{1,-\alpha}$ as $N$ tends to infinity.
It is left to show that $k$ satisfies Hypothesis (f) of \prettyref{sub:integro}.
Let $0<\alpha<\alpha_{0}$. We claim that 
\begin{equation}
\exists c>0\,\forall t>0:\Im\hat{k}(t+\i\alpha)\leq-c\frac{t}{1+t^{2}}.\label{eq:kernel_est}
\end{equation}
Let us consider the function 
\[
\Phi(t)\coloneqq-\frac{1+t^{2}}{t}\Im\hat{k}(t+\i\alpha)=\frac{1}{\sqrt{2\pi}}\frac{1+t^{2}}{t}\intop_{0}^{\infty}\sin(ts)e^{\alpha s}k(s)\mbox{ d}s\quad(t>0).
\]
Obviously, $\Phi$ is continuous. Moreover, $\Phi(t)>0$ for each
$t>0.$ Indeed, we compute 
\begin{align*}
\intop_{0}^{\infty}\sin(ts)e^{\alpha s}k(s)\mbox{ d}s & =\frac{1}{t}\left(\intop_{0}^{\infty}\cos(ts)\left(\alpha k(s)+k'(s)\right)e^{\alpha s}\mbox{ d}s+k(0)\right)\\
 & =\frac{1}{t}\intop_{0}^{\infty}\left(1-\cos(ts)\right)\left(-\alpha k(s)-k'(s)\right)e^{\alpha s}\mbox{ d}s\\
 & \geq\frac{1}{t}\left(\alpha_{0}-\alpha\right)\intop_{0}^{\infty}\left(1-\cos(ts)\right)k(s)e^{\alpha s}\mbox{ d}s
\end{align*}
and since the integrand in the latter integral is positive (except
for $ts=(2j+1)\frac{\pi}{2},\: j\in\mathbb{N})$, we derive that $\Phi(t)>0$.
Moreover, using the latter computation we have that 
\begin{align*}
\Phi(t) & \geq\left(\alpha_{0}-\alpha\right)\frac{1+t^{2}}{t^{2}}\frac{1}{\sqrt{2\pi}}\intop_{0}^{\infty}\left(1-\cos(ts)\right)k(s)e^{\alpha s}\mbox{ d}s\\
 & =\left(\alpha_{0}-\alpha\right)\frac{1+t^{2}}{t^{2}}\left(\frac{1}{\sqrt{2\pi}}|k|_{1,\alpha}-\Re\hat{k}(t+\i\alpha)\right).
\end{align*}
Thus, by the Riemann-Lebesgue Lemma we get $\liminf_{t\to\infty}\Phi(t)\geq\frac{\left(\alpha_{0}-\alpha\right)}{\sqrt{2\pi}}|k|_{1,\alpha}>0$.
Furthermore, using the rule of l'Hospital, we compute 
\begin{align*}
\lim_{t\to0}\Phi(t) & =\lim_{t\to0}\frac{1}{t}\frac{1}{\sqrt{2\pi}}\intop_{0}^{\infty}\sin(ts)e^{\alpha s}k(s)\mbox{ d}s\\
 & =\lim_{t\to0}\frac{1}{\sqrt{2\pi}}\intop_{0}^{\infty}\cos(ts)se^{\alpha s}k(s)\mbox{ d}s\\
 & =\frac{1}{\sqrt{2\pi}}\intop_{0}^{\infty}se^{\alpha s}k(s)\mbox{ d}s>0,
\end{align*}
where we have used that $\left(s\mapsto se^{\alpha s}k(s)\right)\in L_{1}(\mathbb{R}_{\geq0}),$
which follows since $k(s)\leq k(0)e^{-\alpha_{0}s}$ for $s\geq0.$
Summarizing, we have shown that $\Phi:\mathbb{R}_{>0}\to\mathbb{R}_{>0}$
is a continuous function with $\lim_{t\to0}\Phi(t)>0$ and $\lim\inf_{t\to\infty}\Phi(t)>0$
and hence, there is a constant $c>0$ with $\Phi(t)\geq c$ for each
$t>0.$ This proves \prettyref{eq:kernel_est}. Finally, by \cite[Proposition 2.5]{Cannarsa2011},
estimate \prettyref{eq:kernel_est} implies 
\[
\forall t>0,\rho>-\alpha:\Im\hat{k}(t-\i\rho)\leq-\frac{c}{(1+\alpha+\rho)^{2}}\frac{t}{1+t^{2}},
\]
which yields (f) (cp. \prettyref{rem:Cannarsa}).
\end{proof}
We come back to the exponential stability of \prettyref{eq:wave_new}.
Since we have assumed that $k$ satisfies Hypothesis (c), we get that
$(1-k\ast)$ is a boundedly invertible operator on $H_{\rho}(\mathbb{R};L_{2}(\Omega))$
for each $\rho\geq-\alpha.$ Consequently, \prettyref{eq:wave_new}
may be written as%
\footnote{Note that $\tilde{f}\in H_{\rho}(\mathbb{R};L_{2}(\Omega))$ if $f\in H_{\rho}(\mathbb{R};L_{2}(\Omega))$.%
} 
\begin{equation}
\left(\partial_{0}^{2}\left(1-k\ast\right)^{-1}-\dive\grad_{c}+\kappa(1-k\ast)^{-1}\tau_{-h}\partial_{0}\right)u=(1-k\ast)^{-1}f\eqqcolon\tilde{f}.\label{eq:wave_2}
\end{equation}
We assume that $\grad_{c}$ is injective and has closed range, which
can for instance be achieved by assuming that $\Omega$ is bounded
due to Poincar�'s inequality. We denote by $\iota$ the canonical
embedding of $R(\grad_{c})$ into $L_{2}(\Omega)^{n}$, i.e. 
\begin{align*}
\iota:R(\grad_{c}) & \to L_{2}(\Omega)^{n}\\
F & \mapsto F.
\end{align*}
Consequently, $\iota^{\ast}:L_{2}(\Omega)^{n}\to R(\grad_{c})$ is
the orthogonal projection onto the space $R(\grad_{c})$ and we get
$-\dive\grad_{c}=-\dive\iota\iota^{\ast}\grad_{c}.$ Setting $C:D(\grad_{c})\subseteq L_{2}(\Omega)\to R(\grad_{c})$
defined by $C=-\iota^{\ast}\grad_{c}$ we get that $C$ is boundedly
invertible by the closed graph theorem and $C^{\ast}=\dive\iota$
(see e.g. \cite[Lemma 2.4]{Trostorff2012_eeliptic}). Hence, \prettyref{eq:wave_2}
is of the form \prettyref{eq:hyperbolic} with 
\begin{align*}
M_{0}(z) & =\left(1-\sqrt{2\pi}k(-\i z^{-1})\right)^{-1},\\
M_{1}(z) & =\kappa\left(1-\sqrt{2\pi}\hat{k}(-\i z^{-1})\right)^{-1}e^{-hz^{-1}},\\
C & =\iota^{\ast}\grad_{c}.
\end{align*}
Unfortunately, the material law $M(\partial_{0}^{-1})=M_{0}(\partial_{0}^{-1})+\partial_{0}^{-1}M_{1}(\partial_{0}^{-1})$
does not satisfy the assumption of \prettyref{prop:damped_wave} or
\prettyref{prop:exp_decay_integro}. But we can prove the exponential
stability by a perturbation argument. Let us first recall, how to
write \prettyref{eq:wave_2} as a first order system. We define the
material law $M_{d}$ for some $d>0$ by 
\begin{align}
M_{d}(z) & =\left(\begin{array}{cc}
M(z) & 0\\
0 & 1
\end{array}\right)+dz\left(\begin{array}{cc}
-M_{0}(z)\; & \left(dM_{0}(z)-M_{1}(z)\right)C^{-1}\\
0 & 1
\end{array}\right)\label{eq:M_d-1}\\
 & =\left(\begin{array}{cc}
M_{0}(z) & 0\\
0 & 1
\end{array}\right)+dz\left(\begin{array}{cc}
-M_{0}(z)\; & \left(dM_{0}(z)\right)C^{-1}\\
0 & 1
\end{array}\right)+z\left(\left(\begin{array}{cc}
M_{1}(z) & 0\\
0 & 0
\end{array}\right)-d\left(\begin{array}{cc}
0 & M_{1}(z)C^{-1}\\
0 & 0
\end{array}\right)\right)\nonumber 
\end{align}
and set 
\begin{align*}
\tilde{M}_{d}(z) & \coloneqq\left(\begin{array}{cc}
M_{0}(z) & 0\\
0 & 1
\end{array}\right)+dz\left(\begin{array}{cc}
-M_{0}(z)\; & \left(dM_{0}(z)\right)C^{-1}\\
0 & 1
\end{array}\right),\\
N_{d}(z) & \coloneqq\left(\begin{array}{cc}
M_{1}(z) & 0\\
0 & 0
\end{array}\right)-d\left(\begin{array}{cc}
0 & M_{1}(z)C^{-1}\\
0 & 0
\end{array}\right).
\end{align*}
Thus, the first order formulation of \prettyref{eq:wave_2} can be
written as 
\begin{equation}
\left(\partial_{0}\tilde{M}_{d}(\partial_{0}^{-1})+N_{d}(\partial_{0}^{-1})+\left(\begin{array}{cc}
0 & C^{\ast}\\
-C & 0
\end{array}\right)\right)\left(\begin{array}{c}
v\\
q
\end{array}\right)=\left(\begin{array}{c}
\tilde{f}\\
0
\end{array}\right),\label{eq:wave_first_order}
\end{equation}
where $v\coloneqq\partial_{0}u+du$ and $q\coloneqq Cu.$ We note
that according to our findings in \prettyref{sub:integro} and by
\prettyref{prop:exp_decay_invertible_A}, there is $d>0$ such that
\[
\left(\partial_{0}\tilde{M}_{d}(\partial_{0}^{-1})+\left(\begin{array}{cc}
0 & C^{\ast}\\
-C & 0
\end{array}\right)\right)\left(\begin{array}{c}
v\\
q
\end{array}\right)=\left(\begin{array}{c}
\tilde{f}\\
0
\end{array}\right)
\]
is exponentially stable, since the kernel $k$ is assumed to satisfy
the Hypotheses (a)-(f). We denote its stability rate by $\rho_{1}$.
Moreover, since 
\[
\|N_{d}(z)\|\leq\sqrt{1+d^{2}\|C^{-1}\|^{2}}\|M_{1}(z)\|\leq\sqrt{1+d^{2}\|C^{-1}\|^{2}}|\kappa|\left(1-|k|_{1,-\rho_{1}}\right)^{-1}e^{h\rho_{1}},
\]
for each $z\in\mathbb{C}\setminus B\left[-\frac{1}{2\rho_{1}},\frac{1}{2\rho_{1}}\right],$
we may choose $|\kappa|$ small enough to get 
\[
\sup_{z\in\mathbb{C}\setminus B\left[-\frac{1}{2\rho_{1}},\frac{1}{2\rho_{1}}\right]}\|N_{d}(z)\|<\sup_{z\in\mathbb{C}\setminus B\left[-\frac{1}{2\rho_{1}},\frac{1}{2\rho_{1}}\right]}\left\Vert \left(z^{-1}\tilde{M}_{d}(z)+\left(\begin{array}{cc}
0 & C^{\ast}\\
-C & 0
\end{array}\right)\right)^{-1}\right\Vert .
\]
Then, by \prettyref{cor:perturbation} we derive the exponential stability
of \prettyref{eq:wave_first_order}. We summarize our findings in
the following theorem.
\begin{thm}
Let $\Omega\subseteq\mathbb{R}^{n}$ be a domain, such that the Poincar�
inequality holds, i.e. 
\[
\exists c>0\:\forall u\in D(\grad_{c}):|u|_{L_{2}(\Omega)}\leq c|\grad_{c}u|_{L_{2}(\Omega)^{n}}.
\]
Moreover, let $k:\mathbb{R}_{\geq0}\to L(L_{2}(\Omega)^{n})$ be a
kernel that satisfies the Hypotheses (a)-(f) of \prettyref{sub:integro}
and let $h>0$. Then there exists a $\kappa_{0}>0$ such that for
each $|\kappa|<\kappa_{0}$ the problem 
\[
\partial_{0}^{2}u-(1-k\ast)\dive\grad_{c}u+\kappa\tau_{-h}\partial u=f
\]
is exponentially stable (cp. \prettyref{rem: first_to_second}).
\end{thm}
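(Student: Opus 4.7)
The plan is to follow the construction sketched in the paragraphs leading up to the theorem: rewrite the wave equation as the first-order system \prettyref{eq:wave_first_order}, identify its delay-free part as a hyperbolic integro-differential equation covered by Subsection \ref{sub:integro}, and then treat the delay term as a small perturbation controlled by \prettyref{cor:perturbation}.

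First I would set up the functional analytic framework. Hypothesis (c) gives that $1-k\ast$ is boundedly invertible on $H_{\rho}(\R;L_{2}(\Omega))$ for every $\rho\ge -\alpha$, so the equation may be multiplied by $(1-k\ast)^{-1}$ to bring it into the form \prettyref{eq:wave_2}. The Poincar\'e inequality yields that $\grad_{c}$ is injective with closed range, so by the closed graph theorem $C\coloneqq -\iota^{\ast}\grad_{c}$ is boundedly invertible and $C^{\ast}=\dive\iota$. Thus \prettyref{eq:wave_2} is of the form \prettyref{eq:hyperbolic} with
\[
M_{0}(z)=(1-\sqrt{2\pi}\,\hat{k}(-\i z^{-1}))^{-1},\qquad M_{1}(z)=\kappa\,M_{0}(z)\,e^{-hz^{-1}}.
\]

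Next I would introduce the parameter $d>0$ as in \prettyref{eq:M_d} and split the first-order material law as $M_{d}(z)=\tilde{M}_{d}(z)+z\,N_{d}(z)$, where $\tilde{M}_{d}$ contains everything depending on $k$ alone and $N_{d}$ collects all contributions proportional to $\kappa$ (i.e.\ all $M_{1}$-terms). The computations performed in Subsection \ref{sub:integro} already verify that $M_{0}$ alone satisfies the local positive definiteness condition \prettyref{eq:pos_def_M_locally}, so \prettyref{prop:exp_decay_integro} supplies some $d_{0}>0$ for which the delay-free system
\[
\Bigl(\partial_{0}\tilde{M}_{d_{0}}(\partial_{0}^{-1})+\begin{pmatrix}0 & C^{\ast}\\ -C & 0\end{pmatrix}\Bigr)\begin{pmatrix}v\\ q\end{pmatrix}=\begin{pmatrix}\tilde{f}\\ 0\end{pmatrix}
\]
is well-posed and exponentially stable with some rate $\rho_{1}\in(0,\alpha)$.

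Finally I would apply \prettyref{cor:perturbation} to the perturbation $N_{d_{0}}$. On the region $\mathbb{C}\setminus B[-1/(2\rho_{1}),1/(2\rho_{1})]$ the map $z\mapsto z^{-1}$ takes values in $\{\Re w>-\rho_{1}\}\cup\{0\}$, which yields both the pointwise bound $|e^{-hz^{-1}}|\le e^{h\rho_{1}}$ and the Neumann series estimate $\|M_{0}(z)\|\le(1-|k|_{1,-\rho_{1}})^{-1}$. Combining these gives
\[
\|N_{d_{0}}(z)\|\le\sqrt{1+d_{0}^{2}\|C^{-1}\|^{2}}\,|\kappa|\,(1-|k|_{1,-\rho_{1}})^{-1}e^{h\rho_{1}},
\]
uniformly on this region, and choosing $\kappa_{0}>0$ small enough that for $|\kappa|<\kappa_{0}$ this bound stays strictly below the reciprocal of the constant $C$ produced by \prettyref{cor:perturbation} delivers the exponential stability of the full first-order system \prettyref{eq:wave_first_order}. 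An appeal to \prettyref{rem: first_to_second} then transfers the conclusion to the original unknown $u$.

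The main obstacle is really a bookkeeping one: the function $e^{-hz^{-1}}$ is bounded on right half-planes but explodes as $z\to 0$ from other directions, so one must work on the precise exterior $\mathbb{C}\setminus B[-1/(2\rho_{1}),1/(2\rho_{1})]$ rather than on some punctured neighbourhood of the origin. This is what ties the admissible rate for the delay problem to the rate $\rho_{1}$ already produced by the delay-free integro-differential part and forces the two-step strategy (apply \prettyref{prop:exp_decay_integro} first, then \prettyref{cor:perturbation}) rather than a single direct application of \prettyref{prop:exp_decay_invertible_A} to $M_{d}$.
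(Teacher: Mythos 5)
Your proposal is correct and follows essentially the same route as the paper: the same reduction to \prettyref{eq:wave_2}, the same splitting $M_{d}=\tilde{M}_{d}+zN_{d}$ with the $\kappa$-dependent delay terms isolated in $N_{d}$, exponential stability of the delay-free part via the verification of \prettyref{eq:pos_def_M_locally} from Subsection \ref{sub:integro} together with \prettyref{prop:exp_decay_integro}/\prettyref{prop:exp_decay_invertible_A}, and the same bound $\|N_{d}(z)\|\leq\sqrt{1+d^{2}\|C^{-1}\|^{2}}\,|\kappa|\,(1-|k|_{1,-\rho_{1}})^{-1}e^{h\rho_{1}}$ feeding into \prettyref{cor:perturbation}. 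Your closing observation about why $e^{-hz^{-1}}$ forces one to work on the exterior $\mathbb{C}\setminus B[-\frac{1}{2\rho_{1}},\frac{1}{2\rho_{1}}]$ and precludes a direct application of \prettyref{prop:exp_decay_invertible_A} to the full $M_{d}$ is exactly the point implicit in the paper's two-step argument.
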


\section{Acknowledgement}

We thank Marcus Waurick for carefully reading the text and for fruitful
discussions, especially about the proof of \prettyref{thm:characterization_exp_stability}.

\end{document}